\newcommand{\bigslant}[2]{{\raisebox{.1em}{$#1$}\left/\raisebox{-.1em}{$#2$}\right.}}
\theoremstyle{plain}\newtheorem{theorem}{Theorem}[section]
\theoremstyle{plain}
\theoremstyle{plain}\newtheorem{lemma}[theorem]{Lemma}
\theoremstyle{plain}\newtheorem{proposition}[theorem]{Proposition}
\theoremstyle{plain}\newtheorem{definition}[theorem]{Definition}
\theoremstyle{definition}\newtheorem{example}[theorem]{Example}
\theoremstyle{definition}\newtheorem{question}[theorem]{Question}
\theoremstyle{definition}\newtheorem{remark}[theorem]{Remark}
\theoremstyle{definition}
\def\Aff{\mathbf {Aff}_k}
\def\Spec{\mathrm {Spec\; }}
\def\Hom{{\mathrm {Hom}}}
\def\Cov{\mathrm {Cov}}
\def\cO{{\mathcal O}}
\def\cC{{\mathcal C}}
\def\cT{{\mathcal T}}
\def\Im{{\rm Im}}
\def\lMod{{\rm\hbox{-}Mod}}
\def\Coker{{\mathrm {Coker}}} 
\def\Ker{{\mathrm {Ker}}}
\def\id{{\rm id}}
\author{Yves Andr\'e}
\address[Y. Andr\'e]{Sorbonne-Universit\'e , Institut de Math\'ematique de jussieu, 4 Place
Jussieu, 75005 Paris, France.}
\email{yves.andre@imj-prg.fr}
\author{Luisa Fiorot}
\address[L. Fiorot]{Dipartimento di Matematica ``Tullio Levi-Civita'', Universit\`a degli Studi di Padova\\
Via Trieste, 63\\
35121 Padova, Italy.}
\email{luisa.fiorot@unipd.it}
\begin{document}
 \keywords{Canonical topology, effective descent, pure ring map, fpqc topology, finite covering, regularity, Frobenius algebra, rational singularity, splinter, cluster algebra.} 
 \subjclass{14F05, 18F10, 13B, 14A} 
\title[On the canonical, fpqc, and finite topologies]{On the canonical, fpqc, and finite topologies on affine schemes.
 The state of the art.}

\begin{sloppypar}
{\small\begin{abstract} This is a systematic study of the behaviour of finite coverings of (affine) schemes with regard to two Grothendieck topologies: the canonical topology and the fpqc topology. The history of the problem takes roots in the foundations of Grothendieck topologies, passes through main strides in Commutative Algebra and leads to new Mathematics up to perfectoids and prisms. 

 We first review the canonical topology of affine schemes and show, keeping with Olivier's lost work, that it coincides with the effective descent topology. Covering maps are given by universally injective ring maps, which we discuss in detail.  

We then give a ``catalogue raisonn\'e" of examples of finite coverings which separate the canonical, fpqc and fppf topologies. The key result is that {\it finite coverings of regular schemes are coverings for the canonical topology, and even for the fpqc topology} (but not necessarily for the fppf topology). We discuss a ``{weakly} functorial" aspect of this result.

``Splinters" are those affine Noetherian schemes for which every finite covering is a covering for the canonical topology. We present in geometric terms the state of the art about them.   
We also investigate their mysterious fpqc analogs and prove that, in prime characteristic, they are all regular. This leads us to the problem of descent of regularity by (non necessarily flat) morphisms which are coverings for the fpqc topology, which is settled thanks to a recent theorem of Bhatt-Iyengar-Ma. 
  \end{abstract}}
   \maketitle
{\footnotesize{ \tableofcontents}}

 \section*{Introduction}

 \subsection{}  In Algebraic Geometry, {\it finite coverings}, i.e. finite surjective morphisms, play a fundamental role. For instance, Noether's normalization theorem allows us to identify ``up to finite covering" any sequence of nested subvarieties of an affine algebraic variety to a flag of finite-dimensional vector spaces.

 The present paper is an exposition of the behaviour of finite coverings of affine schemes with regard to two 
 distinguished Grothendieck topologies: the {\it canonical topology} and the {\it fpqc topology}. 
This will lead us from the origins of Grothendieck topologies (FGA, SGA 4) to a contemporary area of Commutative Algebra which is far from being a mere Chapter 0 of Algebraic Geometry (as labeled in EGA),   and beyond, up to perfectoids and prisms.

 \subsection{} The fpqc\footnote{fid\`element plate quasi-compacte.} topology is well-known to every contemporary algebraic geometer in view of its eminent role in descent problems: descent of objects and morphisms, and descent of their properties. 
 Much less known is the canonical topology which, by definition, is the finest topology for which all representable presheaves are sheaves. Grothendieck proved that the fpqc topology is coarser than the canonical topology. 

Strictly coarser? The answer - {\it yes} - is not so well-known but was already given by Raynaud and Gruson, who credit Ferrand for the question \cite[II.1.4]{RG}\footnote{a common misconception is that the existence of non flat split morphism settles the issue: beware that one is dealing with the fpqc topology, not the fpqc pretopology, and that split morphisms are coverings  for every Grothendieck topology.}. In fact, there exist finite coverings which are coverings for the canonical topology but not for the fpqc topology. We will discuss several examples (notably one inspired by the theory of Frobenius algebras \ref{frobalg}).

\subsection{} In part I, we review the canonical topology. In SGA 4, exp. 2, one reads that coverings for the canonical topology are ``universal strict epimorphisms"; this is only stated as a definition, but they are indeed universal strict epimorphisms\footnote{as is explained in \cite[IV. 4.3.9]{SGA3}}! On the other hand, in the context of affine schemes, it turns out that ``strict" is superfluous: coverings for the canonical topology correspond dually to {\it pure} homomorphisms, i.e. to universally injective {ring maps}. 

 ``Pure": not only the word\footnote{due either to Bourbaki or to P. Cohn \cite{C} in the 50's.} (which has so many acceptions) but the very notion has lapsed in Algebraic Geometry. In the triad {\it flat/faithfully flat/pure}, which expresses that the base change functor for modules is {\it exact/faithful exact/faithful}, the third term has always been the poor relative.  
 One may wonder why, since faithfully flat descent of modules works as well when ``faithfully flat" is replaced by ``pure" (surprisingly, {it is faithfulness, not exactness, of base change which is relevant here}). What is more, the canonical topology coincides with the (effective) descent topology.  

All this was probably well-known in the early 70's, but because the written records are either inaccessible or fragmentary, this has been largely forgotten, and {later rediscovered by pieces by different authors with different methods}. Since there is no complete account so far, we begin with ``untimely considerations" about the canonical topology on affine schemes, emphasizing its concrete description and its meaning for descent theory, {and we give a detailed exposition of what we believe to be Olivier's original approach}.  

We review module-splitting properties of pure ring maps (notably the fact that {pure extensions of a complete local Noetherian ring $R$ split} as $R$-modules)  
and discuss the question of whether the canonical topology on affine schemes is generated by the fpqc topology and by  morphisms $Y\to X$ such that $\cO_X\to \cO_Y$ 
splits as $\cO_X$-modules (Proposition \ref{rq2}). 

 A sufficient condition for $R\to S$ to be pure is the existence of an $S$-module which is faithfully flat over $R$ (Bourbaki). Ferrand asked about the converse, and we answer in the negative (Example \ref{exF}).

 \subsection{} In part II, we turn to our main topic: finite coverings. We discuss examples of finite coverings which separate the canonical, fpqc and fppf\footnote{fid\`element plate de pr\'esentation finie.} topologies from each other. {This} discussion puts in perspective the following key result: 
 
\smallskip {\it finite coverings of {\it regular} (Noetherian) schemes are coverings for the canonical topology, and even for the fpqc topology} (Theorems \ref{T1} and \ref{T2}). 
 
 \smallskip The first assertion is nothing but a {geometric translation} of the ``direct summand conjecture", while the second is a {geometric interpretation}
   of the existence of big Cohen-Macaulay algebras; both statements were conjectured by Hochster and proved by him in the presence of a base field,   and were proved in general (much later) by the first author, using perfectoid techniques \cite{A1}. On the other hand, finite coverings of regular Noetherian schemes need not be coverings for the fppf topology (Ex. \ref{ex 5.1}\footnote{{in the terminology of Commutative Algebra, the difference between fpqc and fppf here reflects the difference between big and small Cohen-Macaulay algebras.}}). 

 We establish a ``weakly functorial" version of this result (Theorem \ref{T3}), and discuss and exemplify its role. We also pay special attention to the case of the Frobenius morphism, and prove the following extension of Kunz' theorem: {\it in positive characteristic, a scheme is regular if and only if its Frobenius endomorphism is a covering map for the fpqc topology} (Theorem \ref{T6}).

 \subsection{} In part III, we fix the affine Noetherian scheme $X$ and vary the finite covering $Y\to X$.  Which schemes $X$ have the property that every finite covering is a covering for the canonical topology? 
 
 This question is the object of a very active area of Commutative Algebra, the theory of splinters, which we review in a geometric perspective. Beyond regular schemes, many quotient singularities are splinters (Theorem \ref{T5}). 
 
 There is a sharp contrast between splinters in characteristic $0$ (where splinter = normal)  and splinters in positive or mixed characteristic, where this notion happens to be much more restrictive; one might say, heuristically, that the canonical topology is ``much finer" in characteristic $0$ than in characteristic $p$.
  
 Our geometric viewpoint suggests the following analogous question: which schemes have the property that every finite covering is a covering for the fpqc topology? It seems that this question has not yet been explored (beyond the above result which tells that regular schemes have this property), and we give several partial answers.  
 
 \subsection{} We conclude that while the fpqc {\it pre}topology and its descent properties are well-known to every contemporary algebraic geometer, some aspects of the fpqc topology remain mysterious: which properties of schemes descend by (possibly non flat) morphisms which are coverings for the fpqc topology?  
 The case of regularity can be settled thanks to a recent theorem of Bhatt-Iyengar-Ma (Theorem \ref{T7}). 
   
 \subsection{} Outside of some new results and examples, the main purpose of this paper is to {\emph{synthesize}} a rather scattered theme. A large portion is expository, with a view of clarifying several old and new issues; at the same time, it is partly a work of translation and interpretation, recasting in  geometric terms some apparently remote notions and results from Commutative Algebra\footnote{``non tamquam transfuga, sed tamquam explorator" (Seneca, letter to  Lucilius n.2).}; a process which generates in turn some new questions.
  In front of a considerable technological inflation ($F$-singularities and derived avatars, perfectoids and prisms), we feel that such ``dictionaries" illustrated with examples may help the topic move from the esoteric to the exoteric.

  On a more personal level, it is from the geometric viewpoint advocated in this paper that the first author once learned from the second author about the direct summand conjecture, and ended up working in this fascinating area.

       \bigskip
{\footnotesize{\it Acknowledgements.}  We are grateful to D. Ferrand for confirming that our reconstruction of Olivier's proof matches with his memories, and for several other useful comments (cf. Remark~\ref{qF}). Thanks are also due to B. Bhatt for communicating some details about his very recent work on derived splinters in mixed characteristic, and for exchange about the arc-topology. 

This paper has been improved thanks to S.  Kov\'acs's remarks on rational singularities. We are also much indebted to L. Ma, who sent  
useful references and pointed out to us 
that his joint work with Bhatt and Iyengar provides a full answer to our question of
descent of regularity for coverings for the fpqc topology (cf. Theorem~\ref{T7}).
Finally we thank two referees for their careful reading and sharp remarks.
} 
 
\section*{\\Part I. Review of the canonical topology on affine schemes}

\section{{Review of the canonical topology on a category}}

\subsection{}\label{Subs:1.1}
Let us briefly recall the notion of {\it Grothendieck topology} on a category $\cC$. We assume for simplicity that $\cC$ has an initial object $\emptyset_{\cC}$ and that fiber products are representable. The Yoneda embedding $X \mapsto h_X = \Hom_{\cC}(\_,X)$ identifies $\cC$ with a full subcategory  of the category ${\cC}^\wedge$ of presheaves. 
 
Following \cite[II. 1.1]{SGA4}, a \emph{Grothendieck topology} $\cT$ on $\cC$ is the data, for every object $X$, 
 of a set $J_{\cT}(X)$ of sieves (= subobjects of $h_X$ in ${\cC}^\wedge$), called \emph{covering sieves}, containing $h_X$ and satisfying the usual base change and locality conditions.  

 These axioms imply that the intersection of two covering sieves is a covering sieve and that a sieve which contains a covering sieve is a covering sieve (this follows from locality \cite[0, 1.2.1]{Gir1})\footnote{in \cite[Def. 4.2.1]{SGA3} this was stated as an axiom but it follows from the axiom of locality since the base change of a sieve along any morphism in it is the maximal sieve.}. 
A presheaf ${\mathcal F}\in\cC^\wedge$ is a sheaf for $\cT$ if and only if for any object $X$ and any $R\in J_{\cT}(X)$ the canonical map
$F(X)\simeq \Hom_{{\cC}^\wedge}(h_X,F)\to \Hom_{{\cC}^\wedge}(R,F)$
 is bijective.

Fiber products being representable in $\cC$, we can
{translate the previous }
notion {in terms of} \emph{covering families of morphisms} instead of covering sieves 
(\cite[I. 4.3.3]{SGA4}, \cite[2.3.4]{V}), which is easier to handle. 
{Let us first recall that
a} \emph{Grothendieck pretopology} on $\cC$ is 
 the data for any $X$ in $\cC$ of  a class $\Cov_E(X)$ of families 
 of morphisms $(X_i\rightarrow X)_{i\in I}$
 called coverings of $X$ which are stable under base change and composition and such that:
 the family indexed on the empty set covers the initial object $\emptyset_{\cC}$ and
 any isomorphism $\{X'\rightarrow X\}$ is a covering of $X$.
 
 As in the case of topological spaces two different pretopologies can give rise to the same topology.
 {The notion of  Grothendieck topology in terms of covering sieves clarifies that, since any sieve containing a covering sieve is covering, a Grothendieck topology is a Grothendieck pretopology satisfying an extra axiom called the} 
  \emph{saturation axiom}:
any family ${\mathcal I}:=(X_i\rightarrow X)_{i\in I}$
 admitting families 
$(U_{j_i}\rightarrow X_i)_{{j_i}\in J_{i}}$  such that the {compositions}
$(U_{j_i}\rightarrow X)_{i\in I, j_i\in J_i}$ {form} a covering family of $X$,
 is a covering {family.}  
 {This axiom implies that }any split morphism $Y \rightarrow X$ is a covering for any Grothendieck topology since
it is refined by  $\id_X$ which is a covering of $X$.

\begin{remark}\label{R}
In terms of covering families, a
presheaf ${F}\in\cC^\wedge$ is a sheaf if and only if for any
covering family $(X_i\rightarrow X)_{i \in I}$
the diagram induced by the projections
$$\label{sessheaf}
\xymatrix{
F(X) \ar[r] & \prod\limits_{i\in I}F(X_i) 
\ar[r]\ar@<1ex>[r] & \prod\limits_{i, j \in I}
F(X_i\times_X X_j) \\
} $$is an equalizer (= is exact, \cite[II. 2.4]{SGA4}). By cofinality, this can be tested on any generating pretopology.\end{remark}

 \begin{remark}\label{R:empty} Some authors - and we agree - add the axiom that $\emptyset_{\cC^\wedge}\in J_{\cT}(\emptyset_{\cC})$: 
 ``the empty family covers the empty set", which ensures that for any sheaf $F(\emptyset)=\{pt\}$.
 This axiom prevents, for a given sheaf $F_0$, the proliferation of other sheaves $F$ indexed by an arbitrary pointed set $(S, s)$, which coincide with $F_0$ on any non empty object of $\cC$ but take value $F(\emptyset)=S$, the restriction maps $F(X)\to F(\emptyset)$ taking constant value $s$ (for $X$ non empty).   
\end{remark}

\subsection{} One can reverse the point of view, fixing first a family of (normalized) presheaves and looking for the finest topology such that they are sheaves. The answer is exposed in terms of covering sieves in \cite[II. 2.2]{SGA4}:

\begin{proposition}\label{toppsh}
Let ${\mathcal F}=({ F}_i)_{i\in I}$  be a family of presheaves on a category $\cC$ satisfying ${ F}_i(\emptyset_\cC)=\{pt\}$, and for any object $X$, let 
$J_{\mathcal F}(X)$ be the set of sieves $R$ of $X$ such that for any morphism $Y\rightarrow X$
the canonical map ${ F}_i(Y)\simeq \Hom_{\cC^\wedge}(R\times_XY, { F}_i)$ is bijective.
Then $J_{\mathcal F}(X)$ are the covering sieves for a Grothendieck topology, which is the finest for which every ${ F}_i$ is a sheaf. \qed
\end{proposition}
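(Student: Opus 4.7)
The plan is to verify the three Grothendieck topology axioms for $J_\cF$, to observe that each $F_i$ is then automatically a sheaf, and finally to establish the maximality statement. For the maximal sieve, $h_X\times_X Y\simeq h_Y$ for every $Y\to X$, and the canonical map $F_i(Y)\to \Hom_{\cC^\wedge}(h_Y,F_i)$ is the Yoneda isomorphism, so $h_X\in J_\cF(X)$. Stability under base change is formal: given $R\in J_\cF(X)$ and $f:X'\to X$, the canonical identification $(R\times_X X')\times_{X'}Y'\simeq R\times_X Y'$ converts the bijectivity hypothesized for $R$ into that required for $R\times_X X'$ at every $Y'\to X'$.

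The main obstacle is the local character axiom: assuming $R'\in J_\cF(X)$ and that $R$ is a sieve on $X$ with $R\times_X Y\in J_\cF(Y)$ for every $(f:Y\to X)\in R'$, one must show $R\in J_\cF(X)$. Fix an arbitrary test morphism $Y_0\to X$; by base change $R'\times_X Y_0\in J_\cF(Y_0)$, which yields the bijection $F_i(Y_0)\simeq\Hom_{\cC^\wedge}(R'\times_X Y_0,F_i)$. For \emph{injectivity} of $F_i(Y_0)\to\Hom_{\cC^\wedge}(R\times_X Y_0,F_i)$, two sections of $F_i$ over $Y_0$ that agree on $R\times_X Y_0$ pull back along any $(g:Z\to Y_0)\in R'\times_X Y_0$ to elements of $F_i(Z)$ whose equality is forced by the bijection for $R\times_X Z=(R\times_X Y_0)\times_{Y_0}Z\in J_\cF(Z)$; hence they agree on $R'\times_X Y_0$, so already in $F_i(Y_0)$. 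For \emph{surjectivity}, a morphism $\alpha:R\times_X Y_0\to F_i$ restricts for each such $g$ to a morphism $R\times_X Z\to F_i$, which via the bijection at $Z$ produces a unique $\beta_g\in F_i(Z)$; these glue to a morphism $R'\times_X Y_0\to F_i$ (compatibility following from the uniqueness of each $\beta_g$), hence to a unique $s\in F_i(Y_0)$. That $s$ restricts to $\alpha$ on $R\times_X Y_0$ is then checked locally on $R'\times_X W\in J_\cF(W)$ for each $(k:W\to Y_0)\in R\times_X Y_0$, using naturality of $\alpha$ and the fact that for $(h:V\to W)\in R'\times_X W$ the composite $kh$ lies in $R'\times_X Y_0$.

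Once the three axioms are in place the sheaf property is immediate from the very definition: specializing $Y=X$ gives $F_i(X)\simeq \Hom_{\cC^\wedge}(R,F_i)$ for every $R\in J_\cF(X)$. Finality is symmetric and formal: if $\cT$ is any topology for which each $F_i$ is a sheaf and $R$ is a $\cT$-covering sieve of $X$, then for every $Y\to X$ the pullback $R\times_X Y$ is $\cT$-covering of $Y$ (base change axiom of $\cT$), and the sheaf condition supplies $F_i(Y)\simeq\Hom_{\cC^\wedge}(R\times_X Y,F_i)$, i.e. $R\in J_\cF(X)$.
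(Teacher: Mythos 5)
Your proof is correct. It is worth pointing out that the paper does not actually prove Proposition \ref{toppsh}: the authors simply cite SGA 4, II.2.2, so what you have written is precisely the verification that they delegate to that reference, and it is the standard argument. The two easy axioms are fine: the maximal sieve via Yoneda, and stability under base change via the identification $(R\times_X X')\times_{X'}Y'\simeq R\times_X Y'$ (this is exactly what the built-in quantifier over all $Y\to X$ in the definition of $J_{\mathcal F}$ is designed to make automatic). Your treatment of the local character axiom carries the real content and is sound: injectivity by restricting along the members $g\colon Z\to Y_0$ of $R'\times_X Y_0$ and invoking the bijection for $R\times_X Z$; surjectivity by producing the local sections $\beta_g$, checking their compatibility by uniqueness, gluing them to a morphism $R'\times_X Y_0\to F_i$, descending to $s\in F_i(Y_0)$ via the bijection for $R'\times_X Y_0$, and then verifying $s|_{R\times_X Y_0}=\alpha$ again locally (here one uses that for $k\in R\times_X Y_0$ and $h\in R'\times_X W$ the composite $kh$ lies in both $R\times_X Y_0$ and $R'\times_X Y_0$, so that $(kh)^*s=\beta_{kh}=\alpha(kh)$ by uniqueness). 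The maximality argument is the standard symmetric one. One small remark: you never use the normalization hypothesis $F_i(\emptyset_\cC)=\{pt\}$; it plays no role in the three SGA 4 axioms and only becomes relevant if one imposes the supplementary axiom of Remark \ref{R:empty} that the empty sieve covers $\emptyset_\cC$, which your proof (reasonably) does not address.
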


The \emph{canonical topology} can now be defined as  
the finest Grothendieck topology on $\cC$ such that all
representable presheaves are sheaves (note that $h_Y(\emptyset_\cC)=\{pt\}$). Topologies which are coarser than the canonical topology are called 
subcanonical.

For brevity, we shall write \emph{canonical covering} instead of covering for the canonical topology.

 For the canonical topology, the conditions of Proposition~\ref{toppsh} can be reformulated in terms of covering families  (\cite[IV. 4.3.9]{SGA3}):
 a family ${\mathcal U}=(X_i\rightarrow X)_{i\in I}$ is a canonical covering
 if and only if 
for any morphism $Z\rightarrow X$ in $\cC$,
the sequence
$$\xymatrix{
h_W(Z) \ar[r] & \prod\limits_{i\in I}h_W(X_i\times_XZ) 
\ar[r]\ar@<1ex>[r] & \prod\limits_{i, j \in I}
h_W(X_i\times_X X_j\times_XZ) \\
}
$$ 
 (induced by the family 
${\mathcal U}\times_XZ:=(X_i\times_XZ\rightarrow Z)_{i\in I}$)
is exact for every $W$ in $\cC$.  
Hence, in the case of a family given by a single morphism $Y\stackrel{f}\to X$,  by Yoneda, we obtain that $f$ is a   canonical covering if and only if for any morphism $Z\rightarrow X$,
$
\xymatrix{
Y\times_X Y\times_XZ \ar[r]\ar@<1ex>[r] &Y\times_XZ\ar[r] & Z
}
$
is a coequalizer 
(which is the definition of a \emph{universal strict epimorphism}).

\section{The canonical topology on affine schemes}

Let us recall the following basic definition 

\begin{definition}\label{Def:pure}
A ring map $R\stackrel{\alpha}\to S$  is \emph{pure} if it is 
universally injective, i.e. for any  commutative $R$-algebra $T$,  
$T\to S\otimes_R T$ is injective.
\end{definition}

We notice that any pure ring map $R\to S$ induces a  \emph{surjective}
map of spectra $\Spec S\to \Spec R$, because for any
point $x$ of $\Spec R$ with residue field $\kappa(x)$ the map
$\kappa(x)\to S\otimes_R \kappa(x)$ is injective (the converse fails as shown in 
Example~\ref{exbl}
below).

\subsection{} Let ${\bf{Aff}}_k$ be the category of affine schemes over a commutative ring $k$. 
A family ${\mathcal U}=(\Spec R_i\rightarrow \Spec R)_{i\in I}$ is a covering family
for the canonical topology in $\Aff$ if and only if for any commutative $R$-algebra $T$ the sequence 
$\xymatrix  {
T\ar[r] & 
\prod\limits_{i\in I}(R_i\otimes_RT)\ar@<0.5ex>[r]
\ar@<-0.5ex>[r] &{\prod\limits_{i, j \in I}}
(R_i\otimes_RR_j\otimes_RT)}$
is exact.

Let us remark that any ring map $R\stackrel{\alpha}\to S$ which corresponds to a canonical covering is pure.

{Canonical coverings are universal strict epimorphisms, 
but it turns out that ``strict"  follows from ``universal", as the following proposition shows; however, in order to see this, one has to leave the category of commutative rings and consider the category of modules.}

\begin{proposition}\label{P:5eq}
Let $R\stackrel{\alpha}\to S$ be a {ring map}. The following are equivalent:
\begin{enumerate}
\item $\alpha$ is a pure {ring map};
\item for any $R$-module $M$, the morphism $\alpha_M:=\alpha\otimes_R {\id_M}: M\hookrightarrow S\otimes_RM$ is a monomorphism;
 \item for any $R$-module $M$,  the sequence
$M\stackrel{\alpha_M}\to S\otimes_R M\rightrightarrows S\otimes_RS\otimes_R M$ is exact;
\item $\alpha$ is   a universal strict monomorphism, i.e.
for any ring map $R\to T$,
 the sequence
$T\stackrel{\alpha_T}\to S\otimes_R T\rightrightarrows S\otimes_RS\otimes_R T$ is exact;
\item the base change functor $R\lMod \stackrel{S\otimes_R\_}\to S\lMod$ is faithful. 
\end{enumerate}
\end{proposition}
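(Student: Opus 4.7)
The plan is to prove the five conditions equivalent by a short cycle, after first disposing of the ``free'' implications. Specializing $M$ to an $R$-algebra $T$ shows $(2)\Rightarrow(1)$ and $(3)\Rightarrow(4)$; likewise $(3)\Rightarrow(2)$ is immediate, the first map of an equalizer being monic. What remains to verify is $(1)\Rightarrow(2)$, $(4)\Rightarrow(3)$, the equivalence $(2)\Leftrightarrow(5)$, and the substantive step $(2)\Rightarrow(3)$.

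For $(1)\Rightarrow(2)$ and $(4)\Rightarrow(3)$ I would invoke the trivial (Nagata) extension: any $R$-module $M$ embeds as a square-zero ideal in the $R$-algebra $T := R \oplus M$. Base change yields $S \otimes_R T \simeq S \oplus (S \otimes_R M)$, compatibly with the two arrows into $S \otimes_R S \otimes_R T$, so that the Amitsur-type diagram for $T$ splits as the direct sum of the corresponding diagrams for $R$ and for $M$. Hence injectivity (resp.\ exactness) of the $T$-diagram transfers to the $M$-diagram, and the ``all algebras'' versions upgrade to the ``all modules'' versions.

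The heart of the proof is $(2)\Rightarrow(3)$. Set $C := \Coker(\alpha) = S/R$. From $(2)$ applied to $R$ one gets the short exact sequence $0 \to R \to S \to C \to 0$, and tensoring with $M$ together with $(2)$ applied to $M$ yields $0 \to M \xrightarrow{\alpha_M} S \otimes_R M \to C \otimes_R M \to 0$. Now take $x = \sum s_i \otimes m_i \in S \otimes_R M$ with $d_0(x) = d_1(x)$ in $S \otimes_R S \otimes_R M$, where $d_0(s \otimes m) = 1 \otimes s \otimes m$ and $d_1(s \otimes m) = s \otimes 1 \otimes m$. Apply the map $\id_S \otimes \pi \otimes \id_M : S \otimes_R S \otimes_R M \to S \otimes_R C \otimes_R M$, with $\pi : S \to C$ the quotient. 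The image of $d_1(x)$ vanishes because $\pi(1) = 0$, while the image of $d_0(x)$ is $\sum 1 \otimes \bar s_i \otimes m_i = \alpha_{C \otimes_R M}\bigl(\sum \bar s_i \otimes m_i\bigr)$. By $(2)$ applied to the module $C \otimes_R M$, the map $\alpha_{C \otimes_R M}$ is injective, so $\sum \bar s_i \otimes m_i = 0$ in $C \otimes_R M$; this places $x$ in the kernel of $S \otimes_R M \to C \otimes_R M$, i.e.\ in the image of $\alpha_M$, as required.

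The equivalence $(2)\Leftrightarrow(5)$ is then formal. For $(2)\Rightarrow(5)$, given $f : M \to N$ with $1_S \otimes f = 0$, naturality of $\alpha$ gives $\alpha_N \circ f = (1_S \otimes f) \circ \alpha_M = 0$, whence $f = 0$ since $\alpha_N$ is injective. Conversely, any $x \in \Ker \alpha_M$ defines $f_x : R \to M$, $1 \mapsto x$, whose base change sends $s \mapsto s(1 \otimes x) = 0$; so $(5)$ forces $x = 0$. The main obstacle is really step $(2) \Rightarrow (3)$: a naive attempt using only the injectivity of $\alpha_M$ stalls, and one sees that purity for the \emph{auxiliary} module $C \otimes_R M$ is precisely what is needed — this is exactly why the algebra-only hypothesis $(1)$ must first be promoted to the module-theoretic form $(2)$ via idealization before one can squeeze out the equalizer.
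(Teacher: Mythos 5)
Your proof is correct and follows essentially the same route as the paper's: the idealization $R\oplus M$ to pass from the algebra-level hypotheses $(1)$ and $(4)$ to the module-level statements, and, for the key step $(2)\Rightarrow(3)$, the cokernel $C=S/R$ together with injectivity of $\alpha_{C\otimes_R M}$ — which is exactly the paper's diagram chase with $\Coker(\alpha_M)\cong C\otimes_R M$ written out in elements. The formal implications and $(2)\Leftrightarrow(5)$ also match the paper's treatment.
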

\begin{proof}
$(1)\Rightarrow (2)$ In the case of an augmented algebra $S_M=R\times M$ with $M\cdot M=0$, purity
 implies that $\alpha\otimes_R \id_M: M\hookrightarrow S\otimes_RM$ is a monomorphism for any $M\in R\lMod$.
 
$(2)\Rightarrow (3)$ 
 In $(3)$, the double map is $( \eta_1={\alpha_{S\otimes_RM}}, \;\eta_2=1_S\otimes_R \alpha_M)$.
Let  $K\stackrel{\beta}\to S\otimes_RM$ be such that 
$\eta_1 \beta=\eta_2\beta$ and let $S\otimes_R M \stackrel{\pi}\to C$ be the cokernel of $\alpha_M$. One has a commutative diagram 
 {\small \[\xymatrixcolsep{4em}
\xymatrixrowsep{1.3em}
\xymatrix{
& K \ar[ldd]^(0.6)\beta|\hole\ar[dr]^{\beta}\ar@{..>}[ld]_\gamma& & \\ 
M\ar@{->}[rr]^{\alpha_M}\ar@{->}[d]_{\alpha_M}  & &
S \otimes_RM   \ar[d]^{1_S\otimes_R \alpha_M}  \\
S\otimes_RM\ar@{->}[rr]^{\alpha_{S\otimes_RM}}  \ar@{->}[d]_\pi& & S\otimes_R S\otimes_R M  \ar[d]^{1_S\otimes \pi} 
 \\
C\ar@{^(->}[rr]^{\alpha_{C}} & &S\otimes_R C .
\\}\]}
 One has $\alpha_C\pi\beta = (1_S\otimes \pi\alpha_M) \beta = 0$, hence $\pi\beta=0$ by $(2)$. Therefore 
there exists an unique map $K\stackrel{\gamma}\to M$ such that $\beta=\alpha_M\gamma$, which proves that  
$M$ is the equalizer of $(\eta_1, \eta_2)$.

$(3)\Rightarrow (4)$
Condition $(4)$ is a 
 special case of $(3)$ when $M$ is an $R$-algebra.
 
$(4)\Rightarrow (1)$ This is clear, since pure means universally injective.

$(2) \Rightarrow (5)$ Let $M\stackrel{\phi}\to N $ satisfy $1_S \otimes \phi= 0$. Since $\alpha_N\phi = (1_S\otimes \phi)\alpha_M$ and $\alpha_N$ is injective by $(2)$, one has $\phi=0$.

$(5)\Rightarrow (2)$ Let $K\stackrel{\psi}\to M$ satisfy $\alpha_M \psi=0$. By $S$-linearity, $1_S \otimes \psi =0$, and by $(5)$, $\psi =0$, which shows that $\alpha_M $ is injective.  
\end{proof}

 \begin{remark} In the literature of the 60's, for any extension $R\stackrel{\alpha}\hookrightarrow S$, the kernel of the double map  $S \rightrightarrows S\otimes_R S$ was coined the ``dominion" of $\alpha$ (many  forgotten nice results were proven about dominions). The construction can be iterated, replacing $S$ by the dominion and so on, even transfinitely; at the end, one gets a factorization $R\to {\rm{Coim}}\,\alpha\to S$, where $R\to {\rm{Coim}}\,\alpha$ is the largest epimorphism with target contained in $S$  (the name ``coimage" of $\alpha$ was suggested by Grothendieck, cf.  \cite{I}). 
 \end{remark}
 
 \begin{remark}\label{qF} In \cite[I, \S 3, 5]{Bou}, it is shown that  $R\to S$ is pure if there exists an $S$-module which is faithfully flat over $R$. The {\it question of the converse} has been raised by Ferrand, and we shall answer it in the negative at the end of the paper (Example \ref{exF}).   
   \end{remark}
   
\subsection{}  If {the map} $\alpha$ splits in $R\lMod$, it is pure. The converse holds if {the quotient} $S/R$ is of finite presentation \cite[I.2]{La}. This fact is fundamental in the sequel, especially the following special case:
 
  \begin{proposition}\label{P:2} A \emph{finite} algebra $S$ over a Noetherian ring $R$ is pure if and only if it splits in $R\lMod$.  \end{proposition}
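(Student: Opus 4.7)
The plan is to reduce both directions to results already at hand, the key observation being that the Noetherian hypothesis forces $S/R$ to be finitely presented, which allows us to invoke the splitting criterion quoted from \cite{La} just above the proposition.

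For the easy direction, assume $\alpha: R \hookrightarrow S$ splits in $R\lMod$, with retraction $\rho: S \to R$. Then for every $R$-module $M$, the map $\rho \otimes 1_M: S\otimes_R M \to M$ is a retraction of $\alpha_M: M \to S \otimes_R M$, so $\alpha_M$ is (split) injective. By the equivalence $(2) \Leftrightarrow (1)$ of Proposition \ref{P:5eq}, $\alpha$ is pure.

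For the nontrivial direction, suppose $\alpha: R\hookrightarrow S$ is a pure finite extension of the Noetherian ring $R$. The cited result of Lazard \cite[I.2]{La} asserts that a pure ring map $R\to S$ with $S/R$ finitely presented as an $R$-module splits in $R\lMod$. So it suffices to verify that $S/R$ is finitely presented. Since $\alpha$ is finite, $S$ is a finitely generated $R$-module; hence so is its quotient $S/R$. Over the Noetherian ring $R$, any finitely generated module $M$ is automatically finitely presented: choosing a surjection $R^n \twoheadrightarrow M$, its kernel is a submodule of the Noetherian module $R^n$, hence finitely generated, which supplies a presentation $R^m \to R^n \to M \to 0$. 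Applying this to $M = S/R$ and invoking Lazard's result yields the required splitting.

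There is essentially no obstacle beyond the willingness to quote \cite[I.2]{La}; the Noetherian reduction to the finitely presented case is routine. One could in principle try to give a direct proof (for instance, showing that the evaluation map $\Hom_R(S,R)\to R$, $f\mapsto f(1)$, hits $1$ by a local trace-ideal argument), but this would only recover a special case of the Lazard theorem and seems unlikely to be shorter than the reduction sketched above.
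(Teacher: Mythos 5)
Your proof is correct. The difference from the paper is one of self-containedness rather than logic: you discharge the hard direction by quoting Lazard's criterion (pure with $S/R$ finitely presented implies split) and supplying the routine observation that a finite extension of a Noetherian ring has finitely \emph{presented} quotient, whereas the paper does not invoke \cite[I.2]{La} in the proof but instead reproves that criterion directly in this case. Concretely, the paper takes a finite presentation $R^m\stackrel{\rho}\to R^n\to S/R\to 0$, lifts it against $0\to R\to S\to S/R\to 0$, and uses purity applied to the cokernel of the transpose $\rho^t$ to show that a system of $R$-linear equations solvable over $S$ is already solvable over $R$; the solution is then massaged into a retraction of $S\to S/R$, i.e.\ a splitting of $R\hookrightarrow S$. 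Your route is shorter and perfectly legitimate given that the paper itself states the general fact in the sentence immediately preceding the proposition; what the paper's version buys is an explicit demonstration of the mechanism (purity as ``descent of solvability of finitely presented linear systems''), which is the same engine behind the more general statement in Remark \ref{r 1} that pure module maps are filtered colimits of split monomorphisms. Your parenthetical suggestion of a trace-ideal argument is rightly set aside; the reduction you give is the intended one.
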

  \begin{proof}
Let $R\stackrel{\alpha}\to S$ be a pure ring map. Given a morphism $R^n\stackrel{\delta}\to R^m$, one has a commutative diagram
$$\small\xymatrixcolsep{3em}
\xymatrixrowsep{1.5em}
\xymatrix{ 
R^n\ar[r]^{\delta} \ar[d]_{\alpha_{R^n}}& R^m \ar[d]^{\alpha_{R^m}} \ar[r] & \Coker \delta \ar[r] 
\ar@{_(->}[d]^{\alpha_{\Coker \delta}} & 0 \\
S^n \ar[r]^{1_S\otimes_R\delta} & S^m\ar[r] & S\otimes_R\Coker \delta \ar[r] & 0.\\ }$$ Since $\alpha_{\Coker \delta}$ is injective by purity, it follows that for any pair $\psi\in R^m$, $\phi\in S^n$ such that $\alpha_{R^m}(\psi)=(1_S\otimes_R\delta)\phi$,
there exists $\epsilon\in R^n$ such that $\delta(\epsilon)=\psi$.

Now assume that there is a finite presentation $R^m\stackrel{\rho}\to R^n\stackrel{\pi}\twoheadrightarrow {S/R}$. One can find
 $\psi$ and $\phi$ which make the following diagram commute
$$\small\xymatrixcolsep{3em}
\xymatrixrowsep{1.5em}
\xymatrix{ 
0\ar[r] & R \ar[r]^{\alpha} &S\ar[r]^{\nu} & {S/R}\ar[r] \ar@{=}[d]& 0\\
 & R^m \ar[r]^{\rho} \ar[u]^\psi &R^n\ar[r]^\pi\ar[u]^\phi & {S/R}\ar[r] & 0.\\
}$$
Applying the previous observation to $\delta=\rho^t: \Hom_R(R^n,R)\to \Hom_R(R^m,R)$
(since $(1_S\otimes_R\delta)\phi=\phi\rho=\alpha\psi=\alpha_{R^m}(\psi)$), one deduces that there exists
$R^n\stackrel{\epsilon}\to R$ such that $\epsilon\rho=\psi$.  Then $\tau:=\phi-\alpha\epsilon$ {satisfies} $\tau\rho=0$, hence factors through a section of $S\to S/R$.
   \end{proof}

{ \begin{remark}\label{r 1}  Generalizing Definition~\ref{Def:pure}, a morphism of $R$-modules $M\stackrel{\phi}\to N$  is said to be pure if it remains injective after tensoring with any $R$-module. Expressing $ \Coker\, \phi$ as a filtered colimit of $R$-modules of finite presentation, the same argument shows that $\phi$ is pure if and only if it is a filtered colimit of split {monomorphisms} $M\stackrel{\phi_i}\to N_i$ \cite[2.3]{La}.  

  We refer to \cite[I.2]{La}, \cite{Ol}, 
  \cite[2.1]{HH2}, \cite{Do},
  \cite[5]{P3}, \cite[A]{A1}, \cite[08WE]{St} for more on pure morphisms. 
\end{remark} }

There is another general situation where pure ring maps split:
\begin{proposition}\label{Prop:2+}\cite[Lem. 1.2.]{Fed}\footnote{This result does not seem to be widely known. The reference to 	\cite{Fed} was pointed out to us
by two referees independently and L. Ma indicated to us that 
the argument also appears  in the online course \cite[p. 155]{H4}; on p. 84 of this course, it is also shown that if $S/R$ is finitely presented the splitting property is fpqc-local, so that Proposition \ref{P:2} can be derived, after localization/completion, from Proposition \ref{P:Rlocs}.}\label{P:Rlocs} An algebra $S$ over a \emph{complete local} Noetherian ring $R$ is pure if and only if it splits in $R\lMod$. 
 \end{proposition}
\begin{proof}
Let $E$ be the injective hull of the residue field of $R$. 
Since $R$ is complete local Noetherian, it is naturally isomorphic to $\Hom_R(E,E)$ (cf. e.g. \cite[47.7.5]{St}).
By the purity of $\alpha$,  $E\stackrel{\alpha_E}\hookrightarrow S\otimes_RE$ is injective, and since $E$ is an injective $R$-module,
there exists a morphism of $R$-modules $S\otimes_RE\stackrel{\beta_E}\to E$ which makes the triangle 
 {\small\[\xymatrixcolsep{3em}
\xymatrixrowsep{1.5em}
\xymatrix{
E\,\ar@{^(->}[r]^(0.4){\alpha_E}\ar[d]_{\id_E}& S\otimes_RE \ar@{-->}[ld]^{\beta_E} \\ 
E & \\ }\]} 
commute.
The $R$-linear morphism $\beta:S \to \Hom_R(E,E)\simeq R$ defined by $\beta(s):=\beta_E(s\otimes\_)$ is a retraction of $\alpha$,
since $\beta(1_S)(x)=\beta_E\alpha_E(x)=x$ for any $x\in E$.
\end{proof}

{\begin{remark} When $Y\stackrel{f}{\to} X$ is a finite canonical covering of a {\emph{non affine}} Noetherian scheme $X$,  $\cO_X \to f_\ast \cO_Y$ does not necessarily split (even if $f$ is flat). A counterexample is given by any abelian variety $X=Y$ of positive dimension in characteristic $p>0$ and $f = [p]_X$ is the multiplication by $p$ in the group scheme $X$: the map $ \cO_X \to [p]_{X \ast }\cO_X$ vanishes on $H^1$  \cite[Ex. 2.11]{B1}. This is one 
{\emph{main reason}} for which we have chosen to focus on affine schemes in this paper.
\end{remark}
} 

We shall also need the following lemma, which is announced by Olivier in \cite{Ol}:
 
 \begin{lemma} 
  \label{Lemma:1}
  Let $R\stackrel{\alpha}\to S$ be a pure ring map, and $M\stackrel{\varphi}\to N$ be a morphism of $R$-modules. Then ${\rm{Im}}\, \varphi \to  {\rm{Im}}\, (1_S\otimes \varphi )$  is a pure morphism of $R$-modules. 
 \end{lemma}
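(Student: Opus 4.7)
The plan is to exploit the characterization recalled in Remark~\ref{r 1}: a morphism of $R$-modules is pure if and only if it is a filtered colimit of split monomorphisms with fixed source. I would apply this criterion in both directions. First, viewing the pure ring map $\alpha\colon R\to S$ as a pure morphism of $R$-modules, I would write it as a filtered colimit $\alpha = \mathrm{colim}_\lambda\,\alpha_\lambda$ of split $R$-linear monomorphisms $\alpha_\lambda\colon R\hookrightarrow S_\lambda$ with $S = \mathrm{colim}_\lambda\,S_\lambda$; note that the $S_\lambda$ need only be $R$-modules, not $R$-algebras.

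For each index $\lambda$, a choice of $R$-linear retraction of $\alpha_\lambda$ produces a decomposition $S_\lambda = R\oplus X_\lambda$ as $R$-modules. Tensoring $\varphi$ with this splits accordingly:
\[
1_{S_\lambda}\otimes_R\varphi \;=\; \varphi\oplus (1_{X_\lambda}\otimes_R\varphi)\colon\; M\oplus (X_\lambda\otimes_R M)\longrightarrow N\oplus (X_\lambda\otimes_R N),
\]
so that its image is $J_\lambda := I\oplus \Im(1_{X_\lambda}\otimes_R\varphi)$, and the induced map $I\to J_\lambda$ is the inclusion $i\mapsto (i,0)$ of the first summand---a manifestly split $R$-linear monomorphism.

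Since tensor products and images both commute with filtered colimits in $R\lMod$, I expect to recover $J = \Im(1_S\otimes_R\varphi) = \mathrm{colim}_\lambda\,J_\lambda$ and to see $I\to J$ as the filtered colimit of the split monomorphisms $I\to J_\lambda$. A second appeal to Remark~\ref{r 1}, in the converse direction, will then conclude that $I\to J$ is a pure morphism of $R$-modules.

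The main point requiring care is the compatibility of image formation with filtered colimits; this follows from the exactness of filtered colimits in $R\lMod$ (a Grothendieck abelian category) and should be routine. A subtlety---harmless for the argument---is that the splittings $S_\lambda = R\oplus X_\lambda$ are not natural in $\lambda$, so the decompositions $J_\lambda = I\oplus I_\lambda'$ do not assemble into a coherent splitting of $I\to J$. This is inconsequential: Remark~\ref{r 1} only demands that each individual $I\to J_\lambda$ be split, not that the splittings vary coherently.
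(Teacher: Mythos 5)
Your proposal is correct and follows essentially the same route as the paper: both write the pure map $\alpha$ as a filtered colimit of split $R$-linear monomorphisms $R\to S_\lambda$ (via Remark~\ref{r 1}), observe that each splitting induces a splitting of ${\rm{Im}}\,\varphi\to{\rm{Im}}\,(1_{S_\lambda}\otimes\varphi)$, and conclude by passing to the colimit. Your explicit direct-sum decomposition $S_\lambda=R\oplus X_\lambda$ is just a restatement of the paper's use of the retraction $\sigma_\lambda\otimes\id_N$, and your remark that the splittings need not be coherent in $\lambda$ is a correct (and implicitly used) observation.
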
 
 
 \begin{proof} By Remark \ref{r 1}, $\alpha $ is a filtered colimit of  $R$-linear maps $R \stackrel{\alpha_i}\to S_i$ which admit a splitting $\sigma_i$.  
 The image of the composed map $\sigma_i\otimes \varphi: {\rm{Im}}\, (\id_{S_i}\otimes \varphi)  \to S_i\otimes_R N \stackrel{\sigma_i\otimes \id_N}\to N $ lies in ${\rm{Im}}\, \varphi$, hence the splitting $\sigma_i\otimes \id_N$ induces a splitting of ${\rm{Im}}\, \varphi\to {\rm{Im}}\, (\id_{S_i}\otimes \varphi) $. Passing to the colimit, ${\rm{Im}}\, \varphi \to  {\rm{Im}}\,( 1_S\otimes \varphi )$  is pure.
 \end{proof}

\subsection{} Since flatness (resp. faithful flatness) corresponds to exactness (resp. faithful exactness) of the base change functor, the characterization $(5)$ of purity {(Proposition~\ref{P:5eq})} shows that ``faithfully flat = pure + flat". This immediately shows, in geometric terms, the following: 

\begin{lemma}\label{L1} If $Z\stackrel{g}\to Y\stackrel{f}\to X$ are morphisms in ${\bf{Aff}}_k$, such that $g$ is a canonical covering and $fg$ is  flat (resp. faithfully flat), then $f$ is flat (resp. faithfully flat). \qed
\end{lemma}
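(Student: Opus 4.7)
The plan is to translate everything into commutative algebra: writing $X=\Spec R$, $Y=\Spec S$, $Z=\Spec T$, we have ring maps $R\stackrel{f^\sharp}\to S\stackrel{g^\sharp}\to T$, with $g^\sharp$ pure (since $g$ is a canonical covering) and $R\to T$ flat (resp.\ faithfully flat), and we want to deduce that $R\to S$ is flat (resp.\ faithfully flat). The key observation is the characterization $(2)$ of purity in Proposition~\ref{P:5eq}: for every $S$-module $N$, the unit map $N\hookrightarrow T\otimes_S N$ is injective.

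For \emph{flatness}, I would start from an arbitrary injection $M'\hookrightarrow M$ of $R$-modules and consider the commutative square
$$\xymatrix{
S\otimes_R M' \ar[r]\ar[d] & S\otimes_R M \ar[d] \\
T\otimes_S(S\otimes_R M')\ar[r] & T\otimes_S(S\otimes_R M)
}$$
obtained by applying $T\otimes_S-$ to the top row and using the canonical identification $T\otimes_S(S\otimes_R N)\simeq T\otimes_R N$. The bottom row is injective by flatness of $R\to T$, and the two vertical arrows are injective by purity of $S\to T$ applied to the $S$-modules $S\otimes_R M'$ and $S\otimes_R M$. A trivial diagram chase then forces the top arrow to be injective, proving that $S$ is flat over~$R$.

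For the \emph{faithfully flat} case, it remains to check that $S\otimes_R M\neq 0$ for every nonzero $R$-module~$M$. But $R\to T$ being faithfully flat gives $T\otimes_R M\neq 0$, and the identification $T\otimes_R M\simeq T\otimes_S(S\otimes_R M)$ shows that $S\otimes_R M$ cannot vanish. Alternatively, one can simply invoke the equality ``faithfully flat $=$ pure $+$ flat'' recalled just before the lemma: since both $R\to T$ and $S\to T$ are pure, so is $R\to S$ (purity is obviously preserved by ``descent along'' a pure map, by $(2)$ of Proposition~\ref{P:5eq} again), and adding this to the flatness already proved gives faithful flatness.

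There is no real obstacle here; the only subtlety is to resist chasing exact sequences inside rings and instead use the module-theoretic reformulation of purity, which turns the statement into a one-line diagram chase. Everything is formal once Proposition~\ref{P:5eq} is in hand.
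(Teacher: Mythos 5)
Your proof is correct and follows essentially the same route as the paper, which simply observes that base change along $fg$ factors as base change along $g$ after base change along $f$ and invokes the functorial characterization of purity from Proposition~\ref{P:5eq} (the paper uses faithfulness, item $(5)$; you use the pointwise injectivity of $N\to T\otimes_S N$, item $(2)$ — the same content). Your diagram chase and the treatment of the faithfully flat case are both valid.
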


Let us also mention the following lemma from \cite[2.2]{HH2} (here reformulated in terms of canonical coverings), which is useful {for reduction} to the local case:

  \begin{lemma}\label{L2} For any canonical covering $Y\to X$, with $X$ local, there exists a closed point $y\in Y$ such that $\Spec\, \cO_{Y,y}\to X$ is a canonical covering.  \qed \end{lemma}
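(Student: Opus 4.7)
The plan is to identify the canonical covering $Y \to X$ with a pure ring map $R \to S$, where $(R, \mathfrak{m})$ is local (Noetherian, as is implicit from the context), and to find a maximal ideal $\mathfrak{q}$ of $S$ such that $R \to S_\mathfrak{q}$ remains pure. Following the strategy of Proposition \ref{P:Rlocs}, I would use $E := E_R(R/\mathfrak{m})$ as a universal test object: purity of $R \to S$ gives $E \hookrightarrow S \otimes_R E$, so $\Ann_S(1 \otimes x)$ is a proper ideal of $S$ for every $0 \ne x \in E$. The aim is first to locate one maximal ideal $\mathfrak{q}$ containing all these annihilators simultaneously, and then to promote the surviving injection $E \hookrightarrow S_\mathfrak{q} \otimes_R E$ to purity of $R \to S_\mathfrak{q}$.

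For the first step, the key observation is that $E$ has simple, one-dimensional socle $R e_0 \cong R/\mathfrak{m}$, and is an essential extension of it; in particular, for every nonzero $x \in E$ the cyclic submodule $Rx$ contains $e_0$, so $e_0 = r x$ for some $r \in R$ and hence $\Ann_S(1 \otimes x) \subseteq \Ann_S(1 \otimes e_0) =: \mathfrak{a}$. Since $\mathfrak{m}\cdot e_0 = 0$ we have $\mathfrak{m}S \subseteq \mathfrak{a}$, and $\mathfrak{a}$ is proper by purity applied to $e_0$. I would then take $\mathfrak{q}$ to be any maximal ideal of $S$ with $\mathfrak{q} \supseteq \mathfrak{a}$; it automatically lies above $\mathfrak{m}$, and $\Ann_S(1 \otimes x) \subseteq \mathfrak{q}$ for every $0 \ne x \in E$ yields $(1 \otimes x) \ne 0$ in $S_\mathfrak{q} \otimes_R E = (S \otimes_R E)_\mathfrak{q}$, i.e.\ $E \hookrightarrow S_\mathfrak{q} \otimes_R E$.

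For the second step, purity of $R \to S_\mathfrak{q}$ can be checked on finitely generated modules, by a filtered colimit argument. Given $0 \ne x \in M$ with $M$ finitely generated, Krull's intersection theorem supplies $n$ with $\bar x \ne 0$ in the Artinian quotient $\bar M := M/\mathfrak{m}^n M$. This Artinian module embeds $R$-linearly into some $E^k$ (the injective hull of its finite-dimensional socle), and the image of $\bar x$ in $E^k$ has some nonzero coordinate $x_i \in E$; the first step gives $(1 \otimes x_i) \ne 0$ in $S_\mathfrak{q} \otimes_R E$, so the image of $1 \otimes \bar x$ under $S_\mathfrak{q} \otimes_R \bar M \to S_\mathfrak{q} \otimes_R E^k = (S_\mathfrak{q} \otimes_R E)^k$ is nonzero, forcing $1 \otimes \bar x \ne 0$ in $S_\mathfrak{q} \otimes_R \bar M$; the surjection $M \twoheadrightarrow \bar M$ then forces $1 \otimes x \ne 0$ in $S_\mathfrak{q} \otimes_R M$.

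The main obstacle is that $R \to S_\mathfrak{q}$ is not flat in general, so the functor $S_\mathfrak{q} \otimes_R (-)$ does not preserve injections; one cannot deduce ``$E \hookrightarrow S_\mathfrak{q} \otimes_R E$ injective $\Rightarrow$ $A \hookrightarrow S_\mathfrak{q} \otimes_R A$ injective for every Artinian $A$'' at the level of maps. The argument must therefore trace a specific nonzero element through the diagrams above rather than argue about kernels globally.
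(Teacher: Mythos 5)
Your proof is correct and is essentially the argument of Hochster--Huneke [HH2, Lemma 2.2], which the paper cites for this statement without reproducing a proof: one localizes at a maximal ideal containing $\Ann_S(1\otimes e_0)$ for a socle generator $e_0$ of $E_R(R/\mathfrak{m})$, then upgrades injectivity of $E\to S_{\mathfrak{q}}\otimes_R E$ to purity of $R\to S_{\mathfrak{q}}$ via Krull intersection and embeddings of finite-length modules into finite powers of $E$, tracing elements rather than kernels exactly as you indicate. The Noetherian hypothesis on $\cO(X)$ that you make explicit is indeed the intended setting of the cited lemma.
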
 
 
   We end this section with a typical example of a surjective affine morphism which is not a canonical covering.
 
\begin{example}\label{exbl} Let $Bl_O \mathbb A^2_k \to X = \mathbb A^2_k$ be the blow-up of the origin and let $Y$ be the disjoint union of its two standard affine charts. Then $Y\stackrel{f}\to X$ is not a canonical covering in ${\bf{Aff}}_k$. 
 Indeed, the diagonal homomorphism $R =k[x,y] \to S =k[x,y/x] \times k[x/y, y]$ is not pure: modulo the ideal $(x^2, y^2)$, it sends the nonzero class of $xy$ to the class of $ (x^2 \cdot (y/x), (x/y)\cdot y^2)$ which is $0$. In other words, $xy\notin (x^2,y^2)$ but $xy\in (x^2, y^2)S\cap R$. 
  \end{example}

\section{The (effective) descent topology on affine schemes}

\subsection{} Let  $R\stackrel{\alpha}\to S$ be a {ring map} and let $N$ be an $S$-module.
We denote by
$S\otimes_RN$ and $N\otimes_RS$ the two different $S\otimes_RS$-module structures 
on the tensor product of $N$ with $S$ given by
$(s_1\otimes s_2)(n\otimes t)=s_1n\otimes s_2t$ and
$(s_1\otimes s_2)(t\otimes n)=s_1t\otimes s_2n$ respectively.
Given an $S\otimes_RS$-linear isomorphism
$N\otimes_RS \stackrel{\phi}\to S\otimes_RN$, we denote by 
$\phi_i$ the natural morphism obtained by tensoring $\phi$ with $1_S$ 
in the $i$-th position for $i\in\{1,2,3\}$.
For example, 
$\phi_2=({1_S}\otimes \alpha\otimes \id_N)\circ \phi$; in formula, if
$\phi(n\otimes s)=\sum_{j=1}^k s_j\otimes n_j$, then
 $\phi_2(n\otimes s)=\sum_{j=1}^k s_j\otimes 1_S\otimes n_j$.

{
A descent datum on $N$ is an $S\otimes_RS$-linear isomorphism
$N\otimes_RS \stackrel{\phi}\to S\otimes_RN$ such that 
$\phi_1\phi_3=\phi_2$ and $\lambda_N\theta_N=  \id_N$, where
$\lambda_N:S\otimes_RN\to N$ is {the} external multiplication on $N$ and
$\theta_N(n):=\phi(n\otimes 1_S)$.
}

Let  $DD(\alpha)$ be the category whose objects are 
$S$-modules endowed with a descent datum, and 
morphisms are  $S$-linear maps compatible with the descent data.
 
The functor $S\otimes_R\_$ factors through $DD(\alpha)$ (since any $S\otimes_RM$ admits a natural descent datum).
  In the following commutative diagram of functors, $C_\alpha$ is called the comparison functor while
For is the forgetful functor:
{\small\[\xymatrixrowsep{1.5em}
\xymatrix@-5pt{
R\lMod \ar[rr]^{S\otimes_R\_} \ar[rd]_{C_\alpha}
& & S\lMod. \\
& DD(\alpha) \ar[ru]_{{\rm{For}}}& \\
}
\]}

\begin{lemma}\label{Lemma:2}
Given a ring map $R\stackrel{\alpha}\to S$ and $(N,\phi)$ an object in $DD(\alpha)$, 
the diagram
\begin{equation}\label{E:seN}
\xymatrixcolsep{4em}
\xymatrix{
N\ar[r]^(0.4){\theta_N}  & 
  S\otimes_RN \ar@<0.5ex>[r]^(0.45){1_S\otimes \alpha_N}
\ar@<-0.5ex>[r]_(0.45){1_S\otimes \theta_N}  &S\otimes S\otimes N \\
}
\end{equation}
is a split equalizer.  The sequence 
\begin{equation}\label{E:seN1}
\xymatrix{
0\ar[r] & N\ar[r]^(0.4){\theta_N}  &S\otimes_RN \ar[r]
&S\otimes_R \Im(\alpha_N-\theta_N)\ar[r] &0 \\ }
\end{equation}
is split exact  in $S\lMod$.
\end{lemma}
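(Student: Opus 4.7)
The plan is to guess explicit retractions making \eqref{E:seN} a split equalizer, and then deduce \eqref{E:seN1} as a formal consequence. The two candidate retractions come from the $R$-algebra structure on $S$ and the $S$-module structure on $N$:
\[
\sigma := \lambda_N\colon S\otimes_R N\to N \quad\text{and}\quad \tau := \mu_S\otimes\id_N\colon S\otimes_R S\otimes_R N\to S\otimes_R N,
\]
where $\mu_S\colon S\otimes_R S\to S$ is the multiplication of $S$. All the maps in sight (including $\theta_N$, $\alpha_N$, and the two parallel arrows of \eqref{E:seN}) are $S$-linear for the leftmost-factor action, once one extracts from the $S\otimes_R S$-linearity of $\phi$ the $S$-linearity $\theta_N(sn) = (s\otimes 1)\theta_N(n)$ of $\theta_N$.

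The split-equalizer axioms then reduce to four short checks: (i) $\sigma\theta_N = \id_N$ is the unit axiom $\lambda_N\theta_N = \id_N$; (ii) $\tau(1_S\otimes\alpha_N) = \id_{S\otimes_R N}$ comes from $\mu_S(s\otimes 1) = s$; (iii) $\tau(1_S\otimes\theta_N) = \theta_N\sigma$ follows at once from the $S$-linearity of $\theta_N$; and (iv) the equalizer identity $(1_S\otimes\alpha_N)\theta_N = (1_S\otimes\theta_N)\theta_N$ is the cocycle relation $\phi_1\phi_3 = \phi_2$ evaluated on $n\otimes 1\otimes 1$. Subtracting (iii) from (ii) yields the pivotal formula
\[
\id_{S\otimes_R N} - \theta_N\lambda_N \;=\; \tau\circ\bigl(1_S\otimes(\alpha_N-\theta_N)\bigr),
\]
which drives the second half of the argument.

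For \eqref{E:seN1}, the splitting $\lambda_N$ yields an $S$-linear direct-sum decomposition $S\otimes_R N = \theta_N(N)\oplus\ker(\lambda_N)$; factoring $\alpha_N-\theta_N = \iota\circ\bar f$, with $\bar f\colon N\twoheadrightarrow\Im(\alpha_N-\theta_N)$ the canonical surjection and $\iota$ the inclusion into $S\otimes_R N$, the pivotal formula realises the complementary projection as the composition $\tau\circ(1_S\otimes\iota)\circ(1_S\otimes\bar f)$. The $S$-linear map $s\otimes i\mapsto s\cdot i$ provides a section of $1_S\otimes\bar f$ landing in $\ker(\lambda_N)$, since $\lambda_N$ annihilates $\Im(\alpha_N-\theta_N)$. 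The one delicate step — and the main obstacle worth flagging — is the exactness at $S\otimes_R N$: verifying $(1_S\otimes\bar f)\theta_N = 0$ invokes the cocycle identity (iv) once more, via $(1_S\otimes\iota)(1_S\otimes\bar f)\theta_N = (1_S\otimes(\alpha_N-\theta_N))\theta_N = 0$, and concludes by the injectivity of $1_S\otimes\iota$ — tacitly built into the notation $S\otimes_R\Im(\alpha_N-\theta_N)$ of the statement.
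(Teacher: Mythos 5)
Your treatment of the first assertion is correct and is exactly the paper's: the same contracting maps $\lambda_N$ and $\mu_S\otimes\id_N$, the same four identities, with (iv) coming from the cocycle condition restricted to $n\otimes1\otimes1$ and (iii) from the left $S$-linearity $\theta_N(sn)=(s\otimes1)\theta_N(n)$ extracted from the $S\otimes_RS$-linearity of $\phi$. Your ``pivotal formula'' $\id-\theta_N\lambda_N=(\mu_S\otimes\id_N)\circ\bigl(1_S\otimes(\alpha_N-\theta_N)\bigr)$ is precisely the paper's combination of its identities $(3)$ and $(4)$, and the resulting decomposition $S\otimes_RN=\theta_N(N)\oplus\ker\lambda_N$ is a clean way to package the consequences.

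The problem is the step you yourself flag. The injectivity of $1_S\otimes\iota$ is \emph{not} ``tacitly built into the notation'': $\iota$ is a monomorphism of $R$-modules, and the functor $S\otimes_R(-)$ does not preserve monomorphisms for a general ring map $\alpha$ (purity of $\alpha$ means $M\to S\otimes_RM$ is injective, not that $S$ is flat). Worse, this injectivity is not an auxiliary convenience but is \emph{equivalent} to the exactness you are trying to prove: writing $p=1_S\otimes\bar f$ and $j=1_S\otimes\iota$, one checks from the decomposition $S\otimes_RN=\theta_N(N)\oplus\ker\lambda_N$ and $\ker(jp)=\theta_N(N)$ that $\ker j=p(\theta_N(N))$, so ``$p\theta_N=0$'' and ``$j$ injective'' are the same assertion; equally, your claim that $s\otimes i\mapsto s\cdot i$ is a section of $p$ unwinds to the identity $\sum_k ss_k\otimes\bar f(n_k)=0$ for $\theta_N(n)=\sum_ks_k\otimes n_k$, i.e.\ to $p\theta_N=0$ again. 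So the argument is circular at its one delicate point. (To be fair, the paper's own deduction of the splitting from its identity $(5)$, namely $jp\tau jp=jp$, performs the same silent left-cancellation of $j$; you have reproduced the paper's route faithfully, located exactly where the difficulty sits, but not resolved it.) What is actually needed is a direct proof that $(1_S\otimes\bar f)\theta_N=0$ --- equivalently, that $N$ is generated as an $S$-module by $\Ker(\alpha_N-\theta_N)$ --- or else a reformulation of the sequence with third term $\Im\bigl(1_S\otimes(\alpha_N-\theta_N)\bigr)\subseteq S\otimes_RS\otimes_RN$, for which the formal argument does close up because the troublesome $j$ is replaced by an honest inclusion.
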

\begin{proof}
We prove that \eqref{E:seN} is a split equalizer with $S$-linear retractions $\lambda_N$
 and $\mu_S\otimes \id_N$ respectively (where $\mu_S$ is the multiplication on $S$), that is:
\begin{enumerate}
 \item $(1_S\otimes \theta_N) \theta_N=(1_S\otimes \alpha_N) \theta_N$;
\item $\lambda_N  \theta_N=\id_N$;
 \item $(\mu_S\otimes \id_N)(1_S\otimes \alpha_N)=\id_{(S\otimes_RN)}$;
 \item $ (\mu_S\otimes \id_N)(1_S\otimes \theta_N)=\theta_N\lambda_N $.
\end{enumerate} 
 Equality $(1)$ is the cocycle condition $\phi_3\phi_1=\phi_2$ restricted to $N$. $(2)$ holds by definition of a descent datum.
For $(3)$, note that  $$(\mu_S\otimes_R \id_N)(1_S\otimes_R \alpha_N)(s\otimes n)= 
(\mu_S\otimes_R \id_N)(s\otimes 1_S\otimes n)=s\otimes n.$$
For $(4)$, let us set  
 $\theta_N(n)=\sum\limits_{k=1}^m s_k\otimes n_k$. Since $\phi$ is $S\otimes S$-linear, $\theta_N(sn)=\sum\limits_{k=1}^m ss_k\otimes n_k$, so that
$ 
 (\mu_S\otimes \id_N)(1_S\otimes \theta_N)(s\otimes n)
=(\mu_S\otimes \id_N)(\sum\limits_{k=1}^m s\otimes s_k\otimes n_k)=
\theta_N( sn). $ 

 For the second assertion, we observe that $S\otimes_RN \to 
 S\otimes_R \Im(\alpha_N-\theta_N)$ is split, with $S$-linear retraction given by the composition of the natural map $S\otimes_R \Im(\alpha_N-\theta_N)\to S\otimes_RS\otimes_RN$ and $\mu_S\otimes \id_N$: this follows from 
  $$(5)\; \; (1_S\otimes (\alpha_N-\theta_N))(\mu_S\otimes \id_N) (1_S\otimes (\alpha_N-\theta_N))=  1_S\otimes (\alpha_N-\theta_N),$$ which is a combination of $(3), (4)$ and $(1)$.
   \end{proof}

\subsection{}  A {ring map} $R\stackrel{\alpha}\to S$ is called an \emph{effective descent} (resp. descent)  morphism
if the functor $C_\alpha$ is an equivalence of categories (resp. a fully faithful functor).
In the same way a family 
  of morphisms $(X_i\rightarrow X)_{i \in I}$ in ${\Aff}$ is called an effective descent family 
 if the analogous comparison functor is an equivalence of categories.
 In \cite[II, Prop. 1.1.3]{Gir1}, Giraud proved that effective descent families are the covering families for a
 Grothendieck topology called the {\emph{effective descent topology}}.

\begin{theorem}\label{Tdesc} The canonical topology on ${\Aff}$ coincides with the effective descent topology. 
\end{theorem}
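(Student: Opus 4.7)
My plan is to reduce the theorem to showing that a ring map $\alpha:R\to S$ is an effective descent morphism if and only if it is pure; combined with Proposition~\ref{P:5eq}, which identifies canonical coverings with pure ring maps, this will yield the equivalence of the two topologies. A covering family $(\Spec S_i\to \Spec R)_{i\in I}$ in $\Aff$ reduces to the single morphism $\Spec(\prod_i S_i)\to \Spec R$ without altering the notion of covering for either topology, so it suffices to treat single morphisms.

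For the direction \emph{effective descent $\Rightarrow$ pure}, I would extract only the fully faithfulness of $C_\alpha$. Via the adjunction $\Hom_S(S\otimes_RM,-)=\Hom_R(M,-)$, a morphism in $DD(\alpha)$ between $C_\alpha M$ and $C_\alpha N$ identifies with the equalizer of the pair $\Hom_R(M,S\otimes_RN)\rightrightarrows\Hom_R(M,S\otimes_R S\otimes_RN)$. Since $\Hom_R(M,-)$ preserves equalizers, demanding $\Hom_R(M,N)\cong\Hom_{DD(\alpha)}(C_\alpha M,C_\alpha N)$ for every $M$ forces $N\to S\otimes_RN\rightrightarrows S\otimes_R S\otimes_RN$ to be an equalizer for every $R$-module $N$ (by Yoneda as $M$ varies); by Proposition~\ref{P:5eq}(3), $\alpha$ is pure.

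For the converse, I would separately establish fully faithfulness and essential surjectivity of $C_\alpha$ assuming purity. Fully faithfulness is immediate by reversing the adjunction argument, using Proposition~\ref{P:5eq}(3) directly. For essential surjectivity, given $(N,\phi)\in DD(\alpha)$, I would define the descended $R$-module
$$M:=\ker\bigl(\alpha_N-\theta_N:N\to S\otimes_R N\bigr)\ \subset\ N,$$
and show that the $S$-linear multiplication $\mu:S\otimes_R M\to N$, $s\otimes m\mapsto sm$, is an isomorphism compatible with descent data. The candidate two-sided inverse is $\theta_N$ itself: relation~(1) of Lemma~\ref{Lemma:2} will provide (after a purity argument) that $\theta_N(N)\subseteq S\otimes_R M$, relation~(2) gives $\mu\theta_N=\id_N$, and the split equalizer~\eqref{E:seN} together with relation~(4) gives $\theta_N\mu=\id_{S\otimes_R M}$.

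The main obstacle is precisely the factorization $\theta_N(N)\subseteq S\otimes_R M\subset S\otimes_R N$. Relation~(1) of Lemma~\ref{Lemma:2} yields $(1_S\otimes(\alpha_N-\theta_N))\circ\theta_N=0$, but one still needs to pass from the vanishing of this composite in $S\otimes_R\Im(\alpha_N-\theta_N)$ to a statement ``inside'' $S\otimes_R M$. Here Lemma~\ref{Lemma:1} applied to $\varphi=\alpha_N-\theta_N$ makes $\Im(\alpha_N-\theta_N)\to\Im(1_S\otimes(\alpha_N-\theta_N))$ pure, and combining this with the splittings in~\eqref{E:seN1} and the purity of $\alpha$ gives the required factorization. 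Once this is in place, the remaining identities~(2)--(4) of Lemma~\ref{Lemma:2} show that $\mu$ intertwines the natural descent datum on $S\otimes_R M$ with $\phi$, completing essential surjectivity and hence the theorem.
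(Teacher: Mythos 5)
Your overall architecture is the paper's own: reduce to a single ring map by taking products, identify full faithfulness of $C_\alpha$ with purity via the adjunction/Yoneda argument and Proposition~\ref{P:5eq}(3), and prove essential surjectivity by descending $(N,\phi)$ to $M:=\Ker(\alpha_N-\theta_N)$ and showing that the canonical $S$-linear map $S\otimes_R M\to N$ is an isomorphism by means of Lemmas~\ref{Lemma:2} and~\ref{Lemma:1}. (Your $\mu$ is the paper's $\rho$: both send $s\otimes m$ to $sm$, since $\theta_N(m)=1\otimes m$ for $m\in M$.)

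There is, however, one step that does not work as written, and it sits exactly at the crux of essential surjectivity. You apply Lemma~\ref{Lemma:1} to $\alpha_N-\theta_N$, obtaining that $\Im(\alpha_N-\theta_N)\to\Im(1_S\otimes(\alpha_N-\theta_N))$ is pure; but this is not the statement you need, and it is unclear how to extract from it the injectivity of $1_S\otimes\varphi:S\otimes_R M\to S\otimes_R N$ (where $\varphi:M\hookrightarrow N$ is the inclusion). Without that injectivity, ``$\theta_N(N)\subseteq S\otimes_R M$'' and the identity $\theta_N\mu=\id_{S\otimes_R M}$ only make sense in the quotient $\Im(1_S\otimes\varphi)$ of $S\otimes_R M$, and $\mu$ could a priori fail to be injective. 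Note that the factorization itself costs nothing: the split exact sequence~\eqref{E:seN1} identifies $\theta_N(N)$ with $\Ker\bigl(S\otimes_R N\to S\otimes_R\Im(\alpha_N-\theta_N)\bigr)$, while right-exactness of $S\otimes_R(-)$ applied to $0\to M\to N\to\Im(\alpha_N-\theta_N)\to 0$ identifies that same kernel with $\Im(1_S\otimes\varphi)$; no purity enters here. What purity must deliver is precisely the injectivity of $1_S\otimes\varphi$, and for that Lemma~\ref{Lemma:1} has to be applied to $\varphi$ itself: it yields that $\Im\varphi=M\to\Im(1_S\otimes\varphi)$ is pure, and since the identification above exhibits $\Im(1_S\otimes\varphi)\cong N$ compatibly with the inclusion of $M$, one concludes that $\varphi$ is a pure morphism of $R$-modules, hence remains injective after tensoring with $S$. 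With this correction your argument closes: $\mu$ is surjective by right-exactness, and injective because $\theta_N\mu=1_S\otimes\varphi$ is injective.
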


For simplicity,  we treat here the case of a covering given by a single ring map  $R\stackrel{\alpha}\to S$ 
 (this can be adapted to the case of a general family, taking products on the index of the family).
This case is covered by the following result (which is based on Proposition~\ref{P:5eq}
{and Lemma~\ref{Lemma:1}}).

 \begin{proposition}[Olivier]\label{descent}
 Let $R\stackrel{\alpha}\to S$ be a {ring map}. The following are equivalent:
 \begin{enumerate}
\item $\alpha$ is an effective descent morphism;
\item $\alpha$ is a descent morphism;
\item $\alpha$ is a pure {ring map}.
\end{enumerate}
 \end{proposition}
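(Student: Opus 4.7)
The plan is to prove the cycle $(1)\Rightarrow(2)\Rightarrow(3)\Rightarrow(1)$. The implication $(1)\Rightarrow(2)$ is immediate, since every equivalence of categories is fully faithful. For $(2)\Rightarrow(3)$, the forgetful functor $\mathrm{For}\colon DD(\alpha)\to S\lMod$ is evidently faithful (morphisms of descent data inject into $\Hom_S$), so if $C_\alpha$ is fully faithful the composite $\mathrm{For}\circ C_\alpha=S\otimes_R\_$ is faithful as a composition of faithful functors, and Proposition~\ref{P:5eq}, $(5)\Rightarrow(1)$, then gives that $\alpha$ is pure.

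The heart of the argument is $(3)\Rightarrow(1)$. My plan is to construct an explicit quasi-inverse
\[
D\colon DD(\alpha)\longrightarrow R\lMod,\qquad D(N,\phi):=\mathrm{Eq}\Bigl(N\;\overset{\alpha_N}{\underset{\theta_N}{\rightrightarrows}}\;S\otimes_R N\Bigr),
\]
the equalizer being computed in $R\lMod$. For the unit $D\circ C_\alpha\simeq\id$, given $M\in R\lMod$ the canonical descent datum on $S\otimes_R M$ makes $\alpha_{S\otimes_R M}$ and $\theta_{S\otimes_R M}$ coincide with the two face maps of the Amitsur complex, and Proposition~\ref{P:5eq}$(3)$---the only place where purity is used---identifies their equalizer with $M$.

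The main obstacle is the counit $C_\alpha\circ D\simeq\id$. For $(N,\phi)\in DD(\alpha)$ and $M:=D(N,\phi)$, the natural candidate is the $S$-linear multiplication $\mu\colon S\otimes_R M\to N$, $s\otimes m\mapsto sm$, which factors as $\mu=\lambda_N\circ(1_S\otimes i)$ with $i\colon M\hookrightarrow N$ the inclusion. Tensoring the short exact sequence $0\to M\to N\to\mathrm{Im}(\alpha_N-\theta_N)\to 0$ of $R$-modules with $S$ and comparing (via right-exactness of the tensor) with the split-exact sequence~\eqref{E:seN1} of Lemma~\ref{Lemma:2}, one identifies $\mathrm{Im}(1_S\otimes i)$ with the $S$-submodule $\theta_N(N)$ of $S\otimes_R N$; since $\lambda_N\theta_N=\id_N$, this yields surjectivity of $\mu$ and reduces injectivity of $\mu$ to that of $1_S\otimes i$. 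For this last, delicate point---delicate because $\alpha$ need not be flat---I invoke Lemma~\ref{Lemma:1} applied to $i$: the $R$-linear map $f\colon M\to\mathrm{Im}(1_S\otimes i)=\theta_N(N)$, $m\mapsto\theta_N(m)$, is pure, so $1_S\otimes f\colon S\otimes_R M\to S\otimes_R\theta_N(N)$ is injective, and post-composing with the isomorphism $S\otimes_R\theta_N(N)\xrightarrow{\sim}S\otimes_R N$ induced by $\theta_N^{-1}\colon\theta_N(N)\xrightarrow{\sim}N$ recovers precisely $1_S\otimes i$, which is therefore injective. The compatibility of $\mu$ with the descent data reduces, via the $S$-linearity of $\theta_N$ and the cocycle condition, to a routine check.
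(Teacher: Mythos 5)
Your proof is correct and follows essentially the same route as the paper's: the same equalizer $M=\Ker(\alpha_N-\theta_N)$, the same comparison map (your $\mu$ coincides with the paper's $\rho$, since $\rho=\lambda_N\theta_N\rho=\lambda_N\circ(1_S\otimes i)$), surjectivity from Lemma~\ref{Lemma:2} together with right-exactness of $S\otimes_R\_$, and injectivity from Lemma~\ref{Lemma:1}. The only (harmless) deviations are that you deduce $(2)\Rightarrow(3)$ from faithfulness of $S\otimes_R\_$ alone via Proposition~\ref{P:5eq}(5), where the paper uses full faithfulness and Yoneda to reach Proposition~\ref{P:5eq}(3), and that you package $(3)\Rightarrow(1)$ as an explicit quasi-inverse rather than as full faithfulness plus essential surjectivity (note only that purity also enters the counit step through Lemma~\ref{Lemma:1}, not solely in the unit step).
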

 
 In his note \cite{Ol} (which is  just an announcement),
 Olivier comments that the key point is the intermediate result that we have restated as 
 Lemma~\ref{Lemma:1}.  Following this hint, we have tried to reconstitute Olivier's proof
 (there are other proofs and sketches of proof in the literature, notably by Mesablishvili \cite{M1}).

 \begin{proof}
 $(1)\Rightarrow (2)$ is clear.
 
$(2)\Leftrightarrow (3)$.
 Let $R\stackrel{\alpha}\to S$  be a descent morphism.
 The comparison functor $C_\alpha$
is then fully faithful, which amounts to: for any pair of $R$-modules $M_1, M_2$, 
 \begin{equation}\label{E:cf}\small{
\xymatrixcolsep{1em}
\xymatrixrowsep{1em}
\xymatrix{
\Hom_R(M_1,M_2)\ar[r] \ar@{=}[d]& 
\Hom_S(S\otimes_RM_1,S\otimes_RM_2)\ar@<0.5ex>[r]
\ar@<-0.5ex>[r] \ar@{}[d]|*[@]{\cong}&\Hom_{S}(S\otimes_RM_1,S\otimes_RS\otimes_R M_2)\ar@{}[d]|*[@]{\cong}\\
\Hom_R(M_1,M_2)\ar[r] &
\Hom_R(M_1,S\otimes_RM_2)\ar@<0.5ex>[r]\ar@<-0.5ex>[r] &
\Hom_{R}(M_1,S\otimes_RS\otimes_R M_2)}
}\end{equation}
  is an exact sequence. Considering $M_2$  as fixed and letting $M_1$ vary, we get by Yoneda 
 that $M_2\to S\otimes_R M_2\rightrightarrows S\otimes_RS\otimes_R M_2$   is an exact sequence. Since this holds for any $M_2$,  $\alpha$ is pure
by Proposition~\ref{P:5eq}.
Conversely, if $\alpha$ is pure, \eqref{E:cf}  is an exact sequence.

 $(3)\Rightarrow (1)$. Let $R\stackrel{\alpha}\to S$ be a pure ring map.
 Since $(3)$ implies $(2)$ the functor $C_\alpha$ is fully faithful, it remains to prove that it is essentially surjective.
Given an object $(N,\phi)$ in $DD(\alpha)$, 
 we set $M:=\Ker(\alpha_N-\theta_N)$ and denote by $\varphi$ the inclusion of $M$ in $N$. 
 Let us consider the diagram 
 {\small \begin{equation*} 
 {\xymatrixcolsep{1em}
\xymatrixrowsep{1em}
\xymatrix{0\ar[r] & N\ar[r]^(0.4){\theta_N}  &S\otimes_RN \ar[r]
&S\otimes_R \Im(\alpha_N-\theta_N)\ar[r] &0 \\ &S\otimes_R M \ar@{..>}[u]^\rho\ar[ur]_{1_S\otimes \varphi}& }
}\end{equation*}}
Since the row is exact by Lemma ~\ref{Lemma:2}, there is a unique morphism $\rho$ of $S$-modules which makes the diagram commute, and $\rho $ is   
a morphism in $DD(\alpha)$.
Let us show that it is an isomorphism. 
By exactness of the row, and right-exactness of $\_\otimes_R\_$, $\rho$ is surjective, and $\theta_N$ identifies $N$ with the image of $1_S\otimes \varphi$ (in such a way that the composition $\Im \,\varphi \hookrightarrow N \stackrel{\cong}\to \Im\,(1_S\otimes \varphi)$ is the natural map). Since $\alpha$ is pure, we conclude by Lemma~\ref{Lemma:1} that $ \varphi$ is  pure.  
 Therefore $1_S{\otimes} \varphi$ is injective, and since it factors through $\rho$, $\rho$ is also injective.  \end{proof} 

 {\footnotesize\subsection{}  Let us discuss the relationship between the canonical topology, which coincides with the universal effective descent topology,  and some other Grothendieck topologies on schemes, old or new, which are also related to descent properties. We restrict ourselves to ${\Aff}$ for the purpose of coherence.

A morphism $Y\to X$ in ${\Aff}$ is a {\it v-covering} if any valuation ring of $X$ lifts to a valuation ring of $Y$ \cite{BS1}. If ``valuation" is replaced by ``valuation of rank one", one gets the notion {\it arc-covering} \cite{BM}. The corresponding Grothendieck pretopologies on ${\Aff}$ are topologies, i.e. satisfy the saturation axiom of \ref{Subs:1.1}.
The $v$-topology plays a role in the ongoing geometrization of the $p$-adic Langlands program, while the descent properties (excision) of the arc-topology allow to simplify and unify several deep results in \'etale cohomology, and play a role in the theory of prisms. They coincide in the noetherian context, but not in general \cite[2.21]{BM}. 
Both admit interpretations in terms of classical topologies: 

the $v$-topology coincides with the ``universally substrusive" topology, which was already investigated by Picavet in the 80's, cf. \cite{P2}\cite[8]{P3} (revisited by Rydh \cite{Ry} in the non Noetherian situation);  
  the (finer) arc-topology coincides with the ``universally spectrally submersive" topology, cf. \cite[2.2]{BM}.  
 
  \medskip Old work by Olivier shows that pure morphisms induce the quotient topology at the level of spectra, hence shows (beforehand) that the arc-topology and the $v$-topology  are finer than the canonical topology on ${\Aff}$ (strictly finer: example \ref{exbl} provides a $v$-covering \cite[Ex. 5]{P3} which is not a canonical covering). 
  
  On the other hand, if $k={\bf F}_p$, the canonical topology on the category of {\it perfect} schemes in ${\Aff}$ coincides with the restriction of the arc-topology on ${\Aff}$ \cite[5.16]{BM}. It is strictly finer than the restriction of the canonical topology on ${\Aff}$: if $V$ is a perfect valuation ring of rank two, $x$ is a nonzero element of the height one prime $\frak p$ and $y$ is an element of the maximal ideal not in $\frak p$, then the diagonal map $ V\to V_{\frak p}\times V/{\frak p}$ is not pure (modulo $(xy)$, it sends the nonzero class of $x$ to $0$) but induces an arc-covering \cite[2.9]{BM}. }

\section*{\\ Part II. Finite coverings with regard to the canonical, fpqc and fppf topologies}

Let as before ${\bf{Aff}}_k$ be the category of affine schemes over a commutative ring $k$. 
\smallskip

By a \emph{finite covering}\footnote{of course, ``finite covering" does not refer to a covering family for some topology with finitely many morphisms (``finite cover" would be a better name for this notion). Here, the morphisms themselves have to be finite - and in the affine case, one reduces to 
{the} case of a single such morphism.}  of an affine $k$-scheme $X$, we mean a finite surjective morphism $ Y \stackrel{f}{\to} X$ in ${\bf{Aff}}_k$.

\smallskip

If $X  $ is Noetherian, any finite pure extension of $\cO(X)$ splits, so that {\it  a finite covering $Y\to X$ is a covering for the canonical topology (we say: {canonical finite covering}) if and only if  $\cO(X) \to \cO(Y)$ splits as a morphism of $\cO(X)$-modules}.

A morphism $Y\stackrel{f}{\to} X$ in ${\bf{Aff}}_k$ is {\it a covering for the fpqc topology (resp. fppf topology) if and only if there is an $\cO(Y)$-algebra which is faithfully flat over $\cO(X)$ (resp. and of finite presentation)}. Of course, $f$ need not be flat (resp. nor of finite presentation).

\section{Finite coverings which are not coverings for the canonical topology} 

\subsection{} The simplest such examples $ Y  \stackrel{f}{\to} X$ are the following three ones.

\begin{example}\label{ex 3.1} $Y= X_{red} \stackrel{f}{\to} X$, for any non reduced $X$, is not a canonical covering. Indeed, $f$ is not an epimorphism in ${\bf{Aff}}_k$. 

(On the other hand, if $Y \stackrel{f}\to X$ is a canonical covering with $X$ reduced, one may ask whether $Y_{red} \to X$ is still a canonical covering. This may fail
even if $X$ and $Y$ are Noetherian\footnote{Ferrand and Raynaud \cite{FR} have constructed a non normal Noetherian local domain $R$, with normalization $R_{nor}$ such that $\hat R_{red} =\widehat{ R_{nor}}$; thus $R\to \hat R$ is faithfully flat but $R\to \hat R_{red} $ is not pure.}).  
 \end{example}

\begin{example}\label{ex 3.2} The normalization $Y=X_{nor} \stackrel{f}{\to}  X= \Spec\, k[x,y]/(xy)$ of two crossing lines is not a canonical covering. Indeed,  $k[x,y]/(xy) \to k[x] \times k[y]$ is not a pure ring map since its reduction modulo $(x-y)$ sends the nonzero class of $x$ to $0$. Here $f$ is a non universal strict
epimorphism. \end{example}

\begin{example}\label{ex 3.3} More generally, the normalization $Y=X_{nor} \stackrel{f}{\to}  X = \Spec\, R$ of any non normal reduced excellent affine Noetherian scheme is a finite covering which is not a covering for the canonical topology. Indeed, if a fraction $y/x $ belongs to the integral closure $S$ of $R$, $y$ belongs to $xS\cap R $ which is $xR$ if $R\to S$ is pure, and in that case, $y/x\in R$.  

For instance, $ k[t^3, t^5]\to k[t]$ is a non pure finite extension; what is more, the corresponding morphism $f$ 
is a non strict
 epimorphism (\cite[V. 2.b]{SGA3}).
 \end{example}

\subsection{} Let us turn to examples of non canonical finite coverings $Y \stackrel{f}{\to}  X  $ with $X$ {normal}. There is no such (excellent) example if $k$ is a field of characteristic $0$: indeed, one can replace $Y $ by its normalization (which is still finite over $X$) and assume that $X$ and $Y$ are both integral; taking the trace and dividing by the degree gives {{a splitting of $ \cO(X)\to \cO(Y)$, so that $f$ is a canonical covering.}}

In positive or mixed characteristic, one cannot divide by the degree, and {{there are indeed examples}} of non canonical finite coverings with a {\it normal} base $X$.  In the discussion, we shall use the following lemma.

\begin{lemma}\label{l4.4} In the Noetherian situation, finite canonical coverings descend the property of being Cohen-Macaulay\footnote{on the other hand, the example of the double covering of the standard quadric cone (studied below \ref{ex 4.7}) shows that finite canonical coverings do not descend regularity.}.
\end{lemma}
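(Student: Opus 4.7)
The plan is to reduce to the local case, apply the splitting criterion of Proposition~\ref{P:2} to realize $R$ as an $R$-module direct summand of $S$, transport a system of parameters of $R$ up to $S$, and descend the regularity of this sequence across the $R$-linear retraction.

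Write $f\colon \Spec S\to\Spec R$ for the covering, with $S$ Cohen-Macaulay. First I would reduce to $R$ local: Cohen-Macaulayness can be tested at maximal ideals, and for any $\mathfrak{m}\in\Spec R$ the localized map $R_{\mathfrak{m}}\to S\otimes_R R_{\mathfrak{m}}$ remains finite and pure (purity is stable under flat base change, since for an $R_{\mathfrak{m}}$-module $M$ one has $M\otimes_{R_{\mathfrak{m}}}(S\otimes_R R_{\mathfrak{m}})=M\otimes_R S$), while the target remains Cohen-Macaulay as a localization of $S$. So I may assume $R$ is local Noetherian, $R\hookrightarrow S$ is finite pure, and $S$ is Cohen-Macaulay and semi-local with every maximal ideal lying over $\mathfrak{m}$. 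By Proposition~\ref{P:2} the extension splits as $R$-modules; fix an $R$-linear retraction $\sigma\colon S\to R$.

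Next, pick a system of parameters $x_1,\dots,x_d$ of $R$ with $d=\dim R$. Since $R\hookrightarrow S$ is a finite integral extension, $\dim S=d$ as well. The quotient $S/(x_1,\dots,x_d)S$ is finite over the Artinian ring $R/(x_1,\dots,x_d)$, hence Artinian; therefore $(x_1,\dots,x_d)$ is primary to the Jacobson radical of $S$, and its image in each local ring $S_{\mathfrak{n}}$ is a system of parameters. Cohen-Macaulayness of each $S_{\mathfrak{n}}$ makes $x_1,\dots,x_d$ a regular sequence there, and since the natural map $S\to\prod_{\mathfrak{n}}S_{\mathfrak{n}}$ from the Noetherian semi-local ring $S$ is injective, the sequence is in fact $S$-regular.

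Finally, I would descend regularity through $\sigma$. The key subclaim, proved by induction on $d$, is: if $\sigma\colon S\to R$ is an $R$-linear retraction and $y_1,\dots,y_d\in R$ is an $S$-regular sequence, then it is also $R$-regular. The base case follows from injectivity of $R\hookrightarrow S$. For the induction, $R$-linearity of $\sigma$ gives $\sigma((y_1)S)\subseteq (y_1)R$, so $\sigma$ descends to an $R/(y_1)$-linear retraction of $R/(y_1)R\hookrightarrow S/(y_1)S$; applying the hypothesis to this new retraction and the sequence $y_2,\dots,y_d$ completes the induction. Applied to the parameters above, $x_1,\dots,x_d$ is $R$-regular, and thus $R$ is Cohen-Macaulay. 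The main subtlety is the verification that a system of parameters of $R$ induces one in each $S_{\mathfrak{n}}$, which uses finiteness essentially; the rest of the argument is formal.
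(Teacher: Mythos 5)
Your reduction to the local case and your inductive descent of a regular sequence across the retraction $\sigma$ are both sound, and the latter is essentially the same mechanism the paper uses (via purity and \cite[2.1.g]{HH2} rather than an explicit splitting). The gap is in the middle step, where you claim that a system of parameters $x_1,\dots,x_d$ of the local ring $R$ becomes an $S$-regular sequence. Your justification is that $(x_1,\dots,x_d)S$ is primary to the Jacobson radical and hence gives ``a system of parameters in each $S_{\mathfrak{n}}$''; but $d$ elements generating an $\mathfrak{n}S_{\mathfrak{n}}$-primary ideal form a system of parameters only if $\dim S_{\mathfrak{n}}=d$, and purity does not force this for \emph{every} maximal ideal $\mathfrak{n}$ of $S$. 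If $\dim S_{\mathfrak{n}}<d$ for some $\mathfrak{n}$, no sequence of length $d$ can be regular in $S_{\mathfrak{n}}$ (its depth is $<d$), so your sequence fails to be $S$-regular. Concretely, take $R=k[[x]]$ and $S=k[[x]]\times k$ with $\alpha(r)=(r,\bar r)$, $\bar r$ being the residue of $r$ in $k[[x]]/(x)$: this is a finite surjective extension which splits via the first projection (hence is a canonical covering), $S$ is Cohen--Macaulay, yet the parameter $x$ maps to the zero-divisor $(x,0)$, which kills $(0,1)$; the culprit is the maximal ideal coming from the second factor, where $\dim S_{\mathfrak{n}}=0$. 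So the intermediate claim ``the sequence is in fact $S$-regular'' is false as stated and the proof does not close (the example does not contradict the lemma, since this $R$ is Cohen--Macaulay, but it kills the argument).

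The repair is exactly what Lemma \ref{L2} provides and what the paper does: after localizing $R$, replace $S$ by $\cO_{Y,y}=S_{\mathfrak{n}}$ at a well-chosen closed point so that $R\to S_{\mathfrak{n}}$ is \emph{still pure}; this discards the low-dimensional components (in the example above it forces the choice of the first factor and recovers $S_{\mathfrak{n}}=k[[x]]$). One then checks that a secant sequence of $R$ remains secant in the local ring $S_{\mathfrak{n}}$, hence is regular there by Cohen--Macaulayness, and descends to $R$ by purity. Note that after this localization the extension is no longer finite, so one should not invoke Proposition \ref{P:2} to split it; but your induction goes through verbatim with ``pure'' in place of ``split'', since purity of $R\to S$ passes to the quotients $R/(y_1,\dots,y_i)\to S/(y_1,\dots,y_i)S$ and injectivity alone suffices to descend the nonzerodivisor condition at each stage.
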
 

\begin{proof}  
Assuming that any secant sequence\footnote{\label{F1}i.e. a sequence $\underline x$ such that dimension drops by $1$ when one successively divides out by $x_i$; in another language, it is called part of a system of parameters.} in $S$ is regular
or, equivalently, weakly regular\footnote{i.e. such that the image of $x_{i+1}$ is a nonzero divisor after dividing out by $x_1, \ldots, x_i$.}  (since $S$ is Noetherian),
we have to show that  the same is true for $R$. Since $R\to S$ is finite, any secant sequence in $R$ remains secant in $S$. Since $R\to S$ is pure, any secant sequence  $\underline x$ in $R$ which becomes weakly regular in $S$ is already weakly regular in $R$.
Indeed, in the following diagram, the left vertical  and the top horizontal arrows 
are injective since $\alpha$ is pure and $S$ is Cohen-Macaulay
\[
\xymatrix{
{S\over (x_1,\cdots, x_i )}\ar@{^(->}[r] & {S\over (x_1,\cdots, x_i) } \\
{R\over (x_1,\cdots, x_i )}\ar[r] \ar@{^(->}[u] & {R\over (x_1,\cdots, x_i) }. \ar@{^(->}[u] \\
}
\]\end{proof} 

 \begin{example}\label{ex 3.4} The first published  ``normal example'' was $Y = \mathbb A^4_k \stackrel{f}{\to} X =  \mathbb A^4_k/(\mathbb Z/4\mathbb Z)$, where $\mathbb Z/4\mathbb Z$ acts on $\mathbb A^4_k$ by cyclic permutation of the variables, and $k$ is a field of characteristic $2$. It was proved in \cite{Be} that $X$ is normal  
 but not Cohen-Macaulay. By the previous lemma, $f$ cannot be a covering for the canonical topology. 
    \end{example}
 
  \subsection{} However, there is no hope to find an example of a non canonical finite covering of a  {\it regular} base $X$:

\begin{theorem}\label{T1}\cite{A1} Any finite covering of a regular Noetherian scheme is a covering for the canonical topology.\qed
\end{theorem}

This reduces immediately to the affine situation, and since a finite pure {map} of Noetherian rings splits {(as modules)}, the theorem is {\it equivalent} to Hochster's {\it direct summand conjecture}, proved by Hochster if $k$ is a field \cite{H1}, and by the first author in general \cite{A1}, namely: {\it every finite (commutative) algebra over a regular ring $R$ splits as an $R$-module}.  Theorem \ref{T1} is the {geometric translation} of this (ex-)conjecture. 

In  case $k$ is a field of characteristic $0$, as mentioned above, this is settled by a simple argument of divided trace. In case $k$ is a field of characteristic $p$, Hochster gave several short proofs which all rely on a clever use of the Frobenius morphism. In  mixed characteristic, the first author's proof uses perfectoid techniques.
 
\begin{remark}\label{r 3.6} According to Proposition \ref{descent}, an equivalent form of Theorem \ref{T1} is that any finite covering of a regular affine Noetherian scheme is an effective descent morphism for modules.
 \end{remark}

 \begin{remark} Theorem \ref{T1} also holds when $f$ is an {\it integral {surjective} morphism}:  one reduces to the case where $X$ is affine, then $f$ is a filtered colimit of finite coverings, and the statement comes from Theorem \ref{T1}.
  \end{remark}

  \begin{remark} If $X$ is affine regular, then for any finite covering $Y$ of $X$, $\cO(Y)$ is an $\cO(X)$-module of finite projective dimension. Koh asked whether, without assuming that $X$ is regular, any finite covering $Y\to X$ such that $\cO(Y)$ is of finite projective dimension over $\cO(X)$ is a canonical covering. This is true in characteristic $0$, but may fail otherwise \cite[\S 2]{Ve}.
 \end{remark}
 
{ \begin{remark} We have seen that canonical coverings are strict, and that finite coverings of normal affine schemes are canonical in characteristic $0$. In fact, every finite covering of a normal affine scheme is strict. After passing to connected components, we are reduced to show that if $R\to S$ is a finite extension of an integrally closed ring, $R$ is the equalizer of the double map $S\rightrightarrows S\otimes_R S$. Indeed, this is clearly true at the level of vector spaces after tensoring with the fraction field $Q(R)$, and the assertion follows from the fact that $S \cap Q(R)= R$ since $R$ is integrally closed.    \end{remark}}
    
\subsection{Descent of flatness.} It is well-known (and a formal consequence of Proposition \ref{P:5eq}) that pure ring maps $R\to S$ descend flatness of modules. It is more difficult to see that finite monomorphisms descend flatness  \cite{F} \cite[II, 1.2.4]{RG}. Integral monomorphisms descend flatness of finite modules \cite{BR}. If $R$ is not Noetherian, this is not true for arbitrary modules \cite[p. 121]{La}. 
 If $R$ is Noetherian, integral monomorphisms descend flatness of modules: this statement turns out, surprisingly, to be  another {\it equivalent form of Theorem \ref{T1}} \cite{O} (one implication was already implicit in \cite[II.1.4.3.2]{RG}). { See \cite{P2} for more on descent of flatness.}

 \subsection{Frobenius.}\label{frob}  Let $k$ be a field of characteristic $p>0$. Raising coordinates to the power $p$ induces an endomorphism $F_X$ of any $X$ in $\textbf{Aff}_k$: {in fact, it comes from an endomorphism $F$ of the identity functor of $\textbf{Aff}_k$. }

It is often the case that $F_X$ is a finite covering, for instance when $X$ is of finite type over a perfect field $k$ or $X$ is the spectrum of a complete local algebra over a perfect field. 
  
The condition that $F_X$ is a canonical covering - equivalently, that $\cO_X \to F_{X\ast}\cO_X $ splits, when $F_X$ is finite or in the local complete situation (see Propositions~\ref{P:2}, \ref{P:Rlocs}) - has been intensively investigated since the work of Hochster and Roberts  \cite{HR}
 and has led to the theory of $F$-singularities (cf. \cite{TW} for a survey). {For instance, the Frobenius morphism of the quadric cone in odd characteristic (cf. \ref{ex 4.4}) is a canonical covering.} {It turns out that so-called $F$-pure singularities} are  
 characteristic $p$ analogs of log-canonical singularities in characteristic $0$.  They need not be normal nor Cohen-Macaulay.

\section{Finite coverings which are coverings for the canonical topology but not for the fpqc topology}

\subsection{Frobenius-type extensions}\label{frobalg}

Let $R$ be a (commutative) Noetherian ring, and $M$ be a finitely generated $R$-module.  Let $b$ be a symmetric bilinear form 
$  S^2 M \to R$  satisfying the ``associativity rule"
$$ \forall \ell, m, n\in M, \; \ell \,b(m\cdot n) = b(\ell\cdot m)\, n.$$
  A straightforward computation (standard in the context of symmetric Frobenius algebras) shows that the multiplication rule 
 $$(r, m). (r',m') := (rr' + b(m\cdot m') , rm'+r'm)$$
 turns $R\times M$ into a commutative $R$-algebra $S$.  
 
 We set $X= \Spec \, R, \,Y= \Spec\, S.$

 We set $I  = {\rm{Ann}}_R \,M, \; J  = {\rm{Ann}}_R \,I.$ Note that the image of the quadratic form $q$ attached to $b$ is killed by $I$, hence lies in $J$.

\begin{proposition} Assume that the image of $q$ does not lie in $J^2$. Then the finite covering $Y\to X$ is a covering for the canonical topology but not for the fpqc topology. 
\end{proposition}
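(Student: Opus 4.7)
The plan is to split the statement into its two halves: the canonical-covering direction follows from an $R$-linear splitting, while the fpqc direction requires extracting a contradiction from an element realising $q(m)\notin J^2$.

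For the canonical covering, I observe that $S = R\oplus M$ as an $R$-module (the twist appears only in the multiplication), so the unit map $R \hookrightarrow S$ admits the $R$-linear retraction $(r,m)\mapsto r$. Hence $R\to S$ is $R$-split, a fortiori pure, and $Y\to X$ is a canonical covering by Proposition~\ref{P:2}.

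For the fpqc direction, suppose for contradiction that there is an $S$-algebra $T$ which is faithfully flat over $R$. By the hypothesis on $q$, choose $m\in M$ with $q(m)\notin J^2$ and view $m$ in $T$ via $M\subset S\to T$. Since $IM=0$ by definition of $I$, we have $m\in \Ann_T(I)$. Because $R$ is Noetherian, $I$ is finitely generated, so $R/I$ is finitely presented; tensoring the exact sequence $0\to J \to R \to \Hom_R(I,R)$ with the flat $R$-algebra $T$ identifies $\Ann_T(I)\cong J\otimes_R T$ with the ideal $JT\subseteq T$. Writing $m = \sum_k j_k t_k$ with $j_k\in J$, and using that $JT$ is an ideal of $T$ so $m\, t_k\in JT$ for every $k$, a further expansion $m\, t_k = \sum_l j'_{kl} t'_{kl}$ gives
\[
q(m) = m^2 = \sum_{k,l} j_k \, j'_{kl} \, t'_{kl} \in J^2 T.
\]
Faithful flatness of $T/R$ yields $J^2 T \cap R = J^2$, whence $q(m)\in J^2$, contradicting the choice of $m$.

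The main obstacle is the annihilator identity $\Ann_T(I)=JT$: it is what converts the module-level vanishing $IM=0$ into the quantitative ideal-theoretic containment $m\in JT$, and it relies on both Noetherianness (finite presentation of $R/I$) and flatness of $T$. Once this is in hand, squaring $m$ inside $JT$ and descending via faithful flatness are routine.
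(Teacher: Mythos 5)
Your proof is correct and follows essentially the same route as the paper's: the canonical-covering half is the evident $R$-module splitting $S=R\oplus M$, and the fpqc half rests on the same two ingredients, namely the identification $\Ann_T(I)=JT$ (from flatness of $T$ and finite generation of $I$ over the Noetherian $R$) and the descent $J^2T\cap R=J^2$ by purity/faithful flatness. The only difference is organizational: you run the implications forward ($t\in JT\Rightarrow t^2\in J^2T\Rightarrow q(m)\in J^2$), whereas the paper argues contrapositively ($t^2\notin J^2T\Rightarrow t\notin JT=\Ann_T(IT)\Rightarrow It\neq 0$, contradicting $It=0$).
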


 \proof Obviously, $R\to S$ is pure since $R$ is a direct summand. 
 
  If $Y\to X$ is a covering for the fpqc topology, there is an $S$-algebra $T$ which is faithfully flat (= pure + flat) over $R$. Let $m\in M$ be such that $b(m\cdot m)\notin J^2$, and let $t$ denote the image of $(0,m)\in S$ in $T$.  
 
 The exact sequence $0 \to I \to R \to R/I\to 0$ gives rise to an exact sequence $0 \to IT = I\otimes_R T \to T \to T/IT \to 0$ since $T$ is flat over $R$. But $I\otimes (0,m)$ maps to $0$ in $S$. Therefore $It= 0$. 
 
 On the other hand, $t^2$ is the image of $b(m\cdot m)$ in $T$, which is not in $J^2$. Since $R\to T$ is pure, $J^2 T \cap R = J^2$, so that $t^2\notin J^2T$. A fortiori, $t\notin JT$. Since $I$ is a finite $R$-module and $T$ is flat over $R$, $JT = {\rm{Ann}}_T\, IT$.  This implies $It\neq 0$, a contradiction. 
   \qed
 
 \begin{example}\label{ex 4.2}  $R=\, \mathbb Z/4 \mathbb Z, \; M = \mathbb Z/2 \mathbb Z $. Let $b$ be given by multiplication in $\mathbb Z/2$ followed by the embedding of $\mathbb Z/2$ in $ \mathbb Z/4 $ given by the action on the element $[2]\in  \mathbb Z/4 $. It satisfies our assumptions ($J = (2), J^2 = 0, \,q(1)= 2 \neq 0 $ in $\mathbb Z/4$), and we get a non reduced example in dimension $0$ of a canonical finite covering $Y\to X$ which is not a covering for the fpqc topology.  \end{example}
 
  \begin{example}\label{ex 4.3} $R=\, k[x,y]/(xy), \; M = k[y]={R\over (x)} $.
We denote by $\bar{r}$ the image of $r$ in $M$. 
Let $b$ be given by $b(m\cdot n)= ymn\in yk[y] \subset R$. 
  It satisfies our assumptions ($J = (y), \,q(1)= y \notin J^2$), and we get a reduced non normal example in dimension $1$ of a canonical finite covering $Y\to X$ which is not a covering for the fpqc topology.    
  
     In this example, not only $X$ but also $Y$ is reduced: indeed, $(r,m)^2 =0 $ implies 
$r^2 + ym^2= 0$.
Hence ${\bar{r}}^2+ym^2=0$  implies $\bar r=m=0$
since $y$ is not a square in $k(y)$: a contradiction, unless $r=m=0$.  \end{example} 
    
    In both examples, $b$ is actually nonsingular, i.e. identifies $S$ with its $R$-dual, as in the context of Frobenius algebras.

\subsection{} Let us turn to examples where $X$ is {\it normal}. 
We shall rely on a construction due to Raynaud and Gruson \cite[1.4.1.1]{RG}, who proved
for the first time that  {\it the fpqc topology is strictly coarser than the canonical topology}\footnote{the authors are not aware of any other such example in the meantime, before the above more elementary examples (of Frobenius type).}.
However they did not provide an example of a \emph{finite} canonical covering which is not an fpqc covering. 
In the sequel we will refine their construction and provide such an example.

  \begin{example}\label{ex 4.4}  Let us discuss the simplest instance in the Raynaud-Gruson class of examples \cite[1.4.1.1]{RG}:  $X $ is the completed germ (at the vertex) of the standard quadric cone 
    over a field $k$ of characteristic $\neq 2$, i.e. $\Spec\, k[[u,v,w]]/(w^2-v^2-u^2)$. This is a normal quotient singularity (in particular geometrically unibranch). 
   The natural double covering $ \hat{\mathbb A}^2 = \Spec \, k[[x,y]] \to X= 
    \bigslant{\hat{\mathbb A}^2}{(\mathbb Z/2\mathbb Z)}$ (where $\mathbb Z/2\mathbb Z$ acts by $-1$ on both coordinates $x,y$, and $u=2xy, v=x^2 -y^2, w=x^2+y^2 $) produces a non trivial $2$-torsion line bundle $L$ on $X^\ast = X \setminus \{0\}$.

    \bigskip

    Let $f': Y'\to X$ be the affinization of the total space of the corresponding $\mathbb G_m$-torsor on $X^\ast$: $\,Y' = \Spec (\oplus_{n\in \mathbb Z} \Gamma(X^\ast, L^n))$. 
This is a morphism of finite type, which is equidimensional   \cite[1.4.1.1]{RG}.
This fact combined with the fact that $X$ is geometrically unibranch implies 
 the existence of a  
morphism $Y\to Y'$ 
such that the composition $f:Y \to Y'\stackrel{f'}\to X$ is quasi-finite and 
dominant \cite[IV.14.4.4 \& 14.5.9]{EGA}\footnote{these references also show that $Y\to Y'$ can be chosen to be a
locally closed immersion, that is (in a terminology which makes cameo
appearances in EGA): $Y $ is a quasi-section of $f'$. Amusingly, the
notion of quasi-section is nowhere formally defined in EGA: its first
occurrence is \cite[p. 200]{EGA}, where only the meaning of the expression
``to have enough quasi-sections" is explained (with a typo: ``closed
subscheme" should be replaced by ``locally closed subscheme").}.
 Since $X$ is the spectrum of a complete local domain,
$Y$ is a sum $Y_1\amalg Y_2$ where $Y_1$ is a finite covering of $X$ \cite[II.6.2.5]{EGA}.
Replacing $Y$ by $Y_1,$ we can assume that $Y\stackrel{f}\to X$ is a finite covering.
 
 \smallskip

Let us prove that $f$ is not an fpqc covering. 
The $\mathbb G_m$-torsor $Y'$  becomes trivial after pull-back along any morphism $Z^\ast \to X^\ast$ such that $Z^\ast\times_{X}  Y'\to Z^\ast$ has a section.
Assume by contradiction that there is an affine $Y$-scheme $Z$ which is faithfully flat over $X$, and let $Z^\ast $ be the preimage of $Z$ over $X^\ast$. Then $Z^\ast\times_{X}  Y'\to Z^\ast$ has a section, hence the inverse image of $L$ on $Z^\ast$ is trivial, and so is its extension to $Z$. Applying flat base change along $Z\to X$ to the inclusion $X^\ast \to X$, and then faithfully flat descent,  one concludes that $L$ itself is trivial, a contradiction.
Therefore $Y\to X$ is not a covering for the fpqc topology.
  
 On the other hand,
 $Y\stackrel{f}{\to} X$ is a canonical finite covering; in fact, any finite covering of  $X$  is canonical: indeed,
 the quotient map $\mathbb A^2_k\to X$ is a canonical covering (cf. \ref{ex 4.7} below), and 
 any finite covering of
 $\mathbb A^2_k$ is canonical
      since any normal finite covering of $\mathbb A^2_k$ is faithfully flat.  \qed

      \end{example} 
  
    \begin{example}\label{ex 4.7} This example suggests the following question: is the natural double covering of the quadric cone $  {\mathbb A}^2 \to X= \bigslant{ {\mathbb A}^2}{(\mathbb Z/2\mathbb Z)}$  an example of a canonical covering which is not a covering for the fpqc topology?    
  
  It is! 
Since $ k[x,y]^{\mathbb Z/2} \to k[x,y]$ has a natural retraction (the Reynolds operator),  $  {\mathbb A}^2 \to X$ is a canonical covering.
    On the other hand, assume that it is a covering for the fpqc topology. We may localize and complete at the vertex. Let $Y\to X$ be the finite covering of Example \ref{ex 4.4}, which is not a covering for the fpqc topology. But $Y\times_X \hat{\mathbb A}^2$ is a covering for the fpqc topology since any normal finite covering of $\hat{\mathbb A}^2$ is faithfully flat. Therefore $Y\times_X \hat{\mathbb A}^2\to \hat{\mathbb A}^2\to X$ is a composition of coverings for the fpqc topology, which factors through $Y\to X$: a contradiction. 
  
  In a sense, this example looks simpler and more natural than Examples \ref{ex 4.2} and  \ref{ex 4.3}. However, we do not know an elementary proof that it is actually an example; two different proofs  (no more elementary) are given in Theorem~\ref{T7} below. 
 \end{example}

\subsection{} However, there is no hope to find an example  with a {\it regular} base $X$: 

\begin{theorem}\label{T2}\cite{A1}   Any finite covering of a regular Noetherian scheme is a covering for the fpqc topology.\qed
\end{theorem}

\begin{remark} To emphasize the contrast between the fpqc topology and the fpqc pretopology, let us remind that {\it a finite covering $Y\to X$ of a regular Noetherian scheme $X$ is flat if and only if $Y$ is Cohen-Macaulay}. In particular, any Noether normalization of a non Cohen-Macaulay integral affine variety (e.g. $\Spec \, k[x^4,x^3y,xy^3,y^4]\to \Spec \, k[x^4, y^4]=\mathbb A^2_k$) gives an example of a finite covering for the fpqc topology which is not an fpqc covering in the usual sense, i.e. for the pretopology.
\end{remark}

Clearly, Theorem \ref{T2} is stronger than Theorem \ref{T1}.
It is actually much stronger: in case the base ring $k$ is a field of characteristic $0$, Theorem \ref{T1} is essentially trivial, while the only available proofs of Theorem \ref{T2} use reduction modulo $p$ for infinitely many $p$'s, an ultraproduct-type argument, and the proof in characteristic $p$, which is highly non trivial.  

Theorem~\ref{T2} reduces immediately to the affine situation, where it is stated as 0.7.2 in \cite{A1}. It is actually {\it equivalent} to Hochster's conjecture on the {\it existence of big Cohen-Macaulay algebras} for complete local rings, proved by Hochster if $k$ is a field (cf. \cite{H2}), and by the first author in general.  
 Theorem \ref{T2} unveils the {\it geometric meaning} of this (ex-)conjecture.
 
 Let us briefly comment. Given a Noetherian local ring $S$, a (not necessarily finitely generated) $S$-algebra $T$ is said to be a (big) Cohen-Macaulay $S$-algebra if every secant sequence  in $S$  (see footnote~\ref{F1}) becomes regular in $T$\footnote{to insist on the condition that \emph{all} secant sequences in $S$ become regular in $T$, some authors prefer to use the terminology \emph{balanced} big  Cohen-Macaulay algebra. }. If $S$ is regular, this amounts to requiring that $T$ is faithfully flat over $S$.
    In particular, if $S$ is local complete and $p$ is not a zero-divisor in mixed characteristic $(0,p)$, it is a finite extension of a complete regular ring $R$ (by Cohen's structure theorem), and up to substituting $R$ by $S$, we see that an $S$-algebra is Cohen-Macaulay if and only if it is faithfully flat over $R$. This explains why Theorem~\ref{T2} implies Hochster's conjecture. The slightly more technical opposite implication  is proven in \cite[4.3]{A1}.  
  
  \begin{remark}\label{r 5.8} Theorem \ref{T2} also holds when $f$ is an {\it integral {surjective} morphism}. Unlike the case of Theorem \ref{T1}, this is not a trivial extension of \ref{T2}, but this requires an enhancement of the existence of Cohen-Macaulay $R$-algebras. Along the argument of \cite[0.7.2]{A1}, this follows from the existence of {\it absolutely integrally closed} big Cohen-Macaulay algebras, which exist in char. $p$ by \cite{D}, in char. $0$ by \cite{DRG}, in mixed characteristic by \cite{BS} (actually, this already follows from the existence of Cohen-Macaulay $R$-algebras which are algebras {over the absolute integral closure of $R$}, which is easier to establish).
      \end{remark}
 
\begin{remark}\label{q 5.9} Theorem \ref{T2} leaves open the following question.
 {\it Is any canonical covering (say, of finite type) of a regular base a covering for the fpqc topology?}
    A special case reads (cf. Remark \ref{qF}): given a regular ring $R$, a (finitely generated) $R$-algebra $S$ and an $R$-faithfully flat $S$-module, does there exist an $R$-faithfully flat $S$-algebra?   
    \end{remark}
       
   \subsection{Frobenius.} Let $k$ be a field of characteristic $p$ and $X$ be an affine {Noetherian} $k$-scheme. As we have mentioned before, there exist non regular examples for which $F_X$ is a covering for the canonical topology (and often a finite covering).  What about the fpqc topology? {The following result {is a slight extension of} Kunz' theorem \cite{Ku}.
   
   \begin{theorem}\label{T6} $X$ is regular if and only if $F_X$ is a covering map for the fpqc topology.
   \end{theorem}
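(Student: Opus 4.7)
\emph{Proof plan.} I handle the two directions separately, writing $R=\cO(X)$ (Noetherian of characteristic $p$).

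For $X$ regular $\Rightarrow F_X$ an fpqc cover, the argument is immediate from Kunz's theorem: $R$ is regular if and only if $F_R\colon R\to R$ is flat. Since $F_R$ induces the identity map on the underlying space of $\Spec R$ (each prime is Frobenius-stable, as $r^p\in\mathfrak p\Leftrightarrow r\in\mathfrak p$), flatness here upgrades automatically to faithful flatness. Thus $F_X$ is already a covering for the fpqc pretopology (take $T=R$, $g=\id$), a fortiori for the fpqc topology.

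For the converse, suppose there exist $T$ and a ring map $g\colon R\to T$ with $\phi:=g\circ F_R$ faithfully flat, and let us aim for $R$ regular. Since regularity is a local property and $\phi$ is surjective on $\Spec$, one first reduces to the case where $R$ is Noetherian local: for $\mathfrak p\subset R$ pick $\mathfrak q\subset T$ above $\mathfrak p$; as Frobenius fixes primes, $g^{-1}(\mathfrak q)=\mathfrak p$, so the localization $R_\mathfrak p\to T_\mathfrak q$ is still faithfully flat and still factors through the Frobenius of $R_\mathfrak p$. By Kunz's theorem it then suffices to show that $F_R$ is itself flat. The key identity is the $R$-module isomorphism between $T$ (viewed via $\phi$) and $R\otimes_{R,F_R}T$ (viewed via $g$ in the second factor), equivalently the factorization $\phi^\ast\cong g^\ast\circ F^\ast$ of the base-change functors on $R\lMod$; faithful flatness of $\phi$ says this composite is exact and faithful.

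The main obstacle is to extract flatness of $F^\ast$ alone from exactness of $g^\ast\circ F^\ast$, since $g$ is not a priori pure, so $g^\ast$ need not reflect monomorphisms, and Lemma~\ref{L1} cannot be invoked directly on the factorization $\Spec T \to \Spec R \to \Spec R$. I would close this gap by appealing to the extension of Kunz's theorem (due to Dumitrescu and others): the mere existence of a faithfully flat $R$-algebra whose structure map factors through $F_R$ already forces $F_R$ to be flat, hence (by Kunz) $R$ to be regular. An alternative would be to combine the naturality identity $\phi=F_T\circ g$ with Bhatt--Iyengar--Ma's descent of regularity (Theorem~\ref{T7}), after suitably enlarging $T$ so that its own Frobenius becomes faithfully flat; the Dumitrescu route is more economical.
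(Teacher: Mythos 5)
The forward direction is fine (Kunz plus the fact that $F_R$ is the identity on $\Spec R$, so flat implies faithfully flat). In the converse, however, the decisive step is missing. After correctly isolating the obstacle---extracting flatness of $F_R$ from faithful flatness of $\phi = g\circ F_R$---you resolve it by citing an ``extension of Kunz's theorem'' to the effect that the mere existence of a faithfully flat $R$-algebra whose structure map factors through $F_R$ forces $F_R$ to be flat. But that assertion \emph{is} the nontrivial content of Theorem~\ref{T6}: invoking it as a black box makes the argument circular, or at best reduces the theorem to an unverified external statement (and the fallback via Theorem~\ref{T7} ``after suitably enlarging $T$'' is too vague to count as an argument). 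Even if a statement of this kind can be located in the literature, quoting it defeats the purpose, since the point is that it follows in a few lines from what you already have on the page.

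Indeed, you wrote down the identity that closes the gap and then did not use it. By naturality of Frobenius, $\phi = g\circ F_R = F_T\circ g$. The \emph{second} factorization shows that $g$ is pure: $\phi$ is faithfully flat, hence universally injective, and if a composite $\beta\alpha$ of ring maps is universally injective then so is $\alpha$ (for any $R$-module $M$, the injection $M\to T\otimes_R M$ factors through $M\to S\otimes_R M$). So $g\colon R\to T$ is a canonical covering, and your stated worry that ``$g$ is not a priori pure, so Lemma~\ref{L1} cannot be invoked'' evaporates. Now apply Lemma~\ref{L1} to the \emph{first} factorization $R\xrightarrow{F_R}R\xrightarrow{g}T$: since $g$ is a canonical covering and $g\circ F_R=\phi$ is faithfully flat, $F_R$ is faithfully flat, and Kunz gives regularity. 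No localization is needed. This is exactly the paper's proof; your write-up contains all its ingredients but stops one observation short of assembling them, substituting a citation for the missing step.
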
 
   
 \begin{proof} By Kunz' theorem (see also \cite{BS1} for a new proof), $X$ is regular if and only if $F_X$ is (faithfully) flat. Therefore, it remains to prove that if a composed map $h: Z \stackrel{g}\to X \stackrel{F_X}\to X $ is faithfully flat, then $g$ is a canonical covering: indeed, this implies that $F_X$ is faithfully flat (Lemma \ref{L1}). 
 Let us build the diagram
 $$\xymatrixcolsep{3em}
\xymatrixrowsep{1.5em}
\xymatrix{& X \ar[d]^{F_X} \\
 Z \ar[ur]^g \ar[d]^{F_Z}  \ar[r]^{h} & X \ar[d]^{F_X}  \\
   Z \ar[ur]^g \ar[r]^{h}  &X   .\\
}
$$
Since  $gF_Z= F_Xg= h $, this is a commutative diagram, and since $h$ is faithfully flat, hence a canonical covering, so is $g$ as required. 
  \end{proof} }

\section{Finite coverings which are coverings for the fpqc topology but not for the fppf topology}

In view of Theorem \ref{T2}, it suffices to produce a regular ring $R$ and a finite extension $R\to S$  such that there is no finitely generated $S$-algebra $T$ which is faithfully flat over $R$.

\begin{example}\label{ex 5.1} Following Bhatt \cite{B2}, let $(A, L)$ be a  polarized abelian variety
of dimension $d>1$ \emph{over a field $k$ of characteristic $p>0$}. Let 
$S$ be the completion at the origin of the ring of functions of
the cone over  $(A, L)$: $S = \hat\oplus_{n\in \mathbb N} \Gamma (A, L^n)$. This is a complete local domain of dimension $d+1$, which can thus be written as a finite extension of a regular complete local domain $R$ (Cohen). According to \cite{B2}, no finite extension of $S$ is Cohen-Macaulay (equivalently, faithfully flat over $R$). 

\smallskip

 Let us assume, by contradiction, that there exists a finitely generated $S$-algebra $T$ which is faithfully flat over $R$.
 This implies the existence of a morphism $\Spec R'\to \Spec T$ 
such that the composition $\Spec R'\to \Spec T\to \Spec R$ is quasi-finite and faithfully flat \cite[IV. 17.16.2]{EGA}.
 Applying \cite[II. 6.2.5]{EGA} as in Example~\ref{ex 4.4}, we may assume 
 that 
 $R'$  is even finite over $R$.
We regard $R'$ as an $S$-algebra via the composition $S \to T \to R'$. 
This is a finite extension (indeed,  in the composition $R\to S\to R'$, both $S$ and $R'$ are finite $R$-algebras of the same dimension and $S$ is a domain). 
By Bhatt's result, $R'$ cannot be faithfully flat over  $R$, a contradiction.
 \qed
\end{example}

\begin{example} A similar, but simpler example, is given by any normal complete local domain $S$  \emph{over a field $k$ of characteristic $0$}, which is not Cohen-Macaulay. Indeed, any finite extension of $S$ is pure, and by Lemma \ref{l4.4}, no such extension is Cohen-Macaulay. One concludes as in the previous example (writing $S$ as a finite extension of a regular complete local domain $R$). 
\end{example}

 \begin{remark} Given a finite extension $R\to S$ of complete local domains, with $R$ regular, it is not known whether there exists an $R$-free finitely generated $S$-module, but there is a countably generated such module $M$ \cite{Gri}\footnote{the result is stated there under the assumption that $k$ is a field, but this assumption is only used through the existence of big Cohen-Macaulay $S$-algebras, which was available only under this assumption at the time.}.  It would be interesting to know whether there is a {\it countably generated $S$-algebra} which is $R$-free.  \end{remark}

\section{On ``weak functoriality" of coverings for the fpqc topology}

\subsection{} We consider a commutative square in ${\bf{Aff}}_k$  
{\small\begin{equation}\label{CD1}  
\xymatrix   @-1.2pc{     Y'  \ar[d]_{f'}  \,  \ar@{->}[r]     & Y   \ar[d]^f    \\ X'  \,  \ar@{->}[r]^h  & X }  \end{equation}} 
where the vertical maps are {\it covering maps for the fpqc topology}. This means that \eqref{CD1}  
 can be completed into a diagram
{\small\begin{equation}\label{CD2}  \xymatrix   @-1.2pc{     Z'  \ar[d]_{g'}   \,     & Z  \ar[d]^g      
  \\  \;\, Y'  \ar[d]_{f'}   \,  \ar@{->}[r]   & \;\, Y  \ar[d]^f   
      \\ X'  \,  \ar@{->}[r]    & X   }  \end{equation} }
      where the compositions of vertical maps are {\it faithfully flat}. 
  
      We say that the diagram \eqref{CD1} satisfies ``fpqc weak functoriality" if one can choose \eqref{CD2} in such a way that it extends to a commutative diagram\footnote{this condition is trivially satisfied if the bottom square is cartesian.} 
{\small\begin{equation}\label{CD3}\xymatrix   @-1.2pc{     Z'  \ar[d]_{g'}   \,  \ar@{->}[r]    & Z  \ar[d]^g      
  \\  \;\, Y'  \ar[d]_{f'}   \,  \ar@{->}[r]   & \;\, Y  \ar[d]^f  
      \\ X'  \,  \ar@{->}[r]    & X  . }   \end{equation} }
      
This may seem a futile request at first sight, but this turns out to be closely related to a key property for the whole skein of the so-called homological conjectures in Commutative Algebra: weak functoriality of Cohen-Macaulay algebras (conjectured by Hochster, and proved by him and Huneke when $k$ is a field \cite{H2}\cite{HH2}, and by the first author in general \cite{A2} using perfectoids), which asserts that any morphism $Y'\to Y$ which corresponds to a local homomorphism of complete local domains sits in a commutative square in ${\bf{Aff}}_k$ 
{\small \begin{equation}\label{CD4}  \xymatrix   @-1.2pc{     Z'  \ar[d]_{g'}  \,  \ar@{->}[r]     & Z   \ar[d]^g   \\ Y'  \,  \ar@{->}[r]  & Y }  \end{equation} }where $\cO(Z) $ and $\cO(Z')$ are Cohen-Macaulay algebras for $\cO(Y) $ and $\cO(Y')$ respectively. 
          
 \subsection{} Fpqc weak functoriality holds in the situation of Theorem \ref{T2}: 
 
\begin{theorem}\label{T3}  If $X,X'$ are regular and $f,f'$ are finite coverings, then \eqref{CD1} satisfies ``fpqc weak functoriality".  
 \end{theorem}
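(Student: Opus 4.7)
The plan is to deduce the theorem from the \emph{weak functoriality of big Cohen--Macaulay algebras}: Hochster and Huneke's result \cite{HH2} in equal characteristic (extended by the first author \cite{A2} to mixed characteristic) asserts that any local homomorphism of complete local Noetherian domains lifts to a ring map between suitably chosen big CM algebras. This is the same engine as Theorem~\ref{T2}, now augmented with the compatibility data coming from $Y'\to Y$.

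I first reduce to the local complete setting. Pick a closed point $y'\in Y'$, let $y,x',x$ denote its successive images in $Y,X',X$, localize at these four points and complete, and (if necessary) pass to compatible minimal primes in the completions. The result is a commutative square of complete local Noetherian domains
{\small \[\xymatrix @-1.2pc { \hat R \ar[r] \ar[d] & \hat S \ar[d] \\ \hat R' \ar[r] & \hat S' }\]}with horizontal maps finite and $\hat R,\hat R'$ regular. Weak functoriality of big CM algebras applied to $\hat S\to\hat S'$ yields big CM algebras $B$ for $\hat S$, $B'$ for $\hat S'$, and a ring map $B\to B'$ compatible with $\hat S\to\hat S'$; commutativity of the original square then forces the full square of algebras to commute. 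Since $\hat R$ and $\hat R'$ are regular, $B$ (resp.\ $B'$) is automatically faithfully flat over $\hat R$ (resp.\ $\hat R'$), exactly as in the local content of Theorem~\ref{T2}. Setting $Z:=\Spec B$ and $Z':=\Spec B'$ yields the diagram~\eqref{CD3}, but a priori only over the local completions.

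To globalize, I exploit that the theorem merely asserts the \emph{existence} of some pair $Z,Z'$ in $\mathbf{Aff}_k$: the canonicity of the perfectoid construction of \cite{A2}---which produces $B$ and $B'$ from absolute integral closures followed by perfectoidization---makes the choices coherent as the chosen closed point of $X'$ varies. This allows the local pieces to be assembled into global affine $Z,Z'$ whose structure maps to $X,X'$ are flat and surjective onto every closed point, and hence faithfully flat (a flat morphism onto a Noetherian scheme surjective on closed points is surjective by going-down).

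The main obstacle is this final globalization: weak functoriality of big CM algebras is intrinsically a statement about \emph{complete local} domains, whereas the theorem concerns global affine schemes. The delicate part is organizing compatible choices at every closed point of $X'$ into a single affine scheme $Z'$ faithfully flat over $X'$ (and analogously $Z$ over $X$); the functorial nature of the perfectoid construction is what makes this feasible.
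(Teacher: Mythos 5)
Your local step is exactly the paper's: reduce to a square of complete local domains with finite horizontal maps and regular targets, apply weak functoriality of big Cohen--Macaulay algebras to $\hat S\to\hat S'$, and observe that a big CM algebra over a module-finite extension of a regular local ring is faithfully flat over that regular ring. (The paper passes to the normalization of $Y\times_X\Spec\,\hat\cO_{X,x}$ to land in a finite disjoint union of complete local domains, rather than your ``compatible minimal primes,'' but that is a cosmetic difference.) The genuine gap is in your globalization. You rightly flag it as the delicate point, but the mechanism you propose --- invoking ``canonicity of the perfectoid construction'' to make the big CM algebras at different closed points ``coherent,'' and then ``assembling'' them --- is not an argument: it is never said what coherence means here, nor how a family of schemes over the various $\Spec\,\hat\cO_{X',x'}$ would be glued or amalgamated into a single affine scheme over $X'$. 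In fact no coherence between the local choices is needed at all.

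The paper's fix is a product trick. Having chosen, for each $x'\in X'$, a square of big CM algebras over $\Spec\,\hat\cO_{X',x'}\to\Spec\,\hat\cO_{X,h(x')}$ (and, for each $x\in X$ not in the image of $X'$, a faithfully flat composition $Z_x\to Y_{[x]}\to\Spec\,\hat\cO_{X,x}$ --- a case you must not forget, or $Z\to X$ will fail to be surjective), one takes the coproduct in $\mathbf{Aff}_k$, i.e.\ the spectrum of the \emph{product} of all these rings, indexed over the points of $X$ and of $X'$ respectively. The arbitrary ring maps between the factors induce a single ring map between the products, giving the square \eqref{CD3} with no compatibility condition across indices. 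The only thing to check is that the composed vertical maps remain faithfully flat, and this is where Noetherianity of $X$ and $X'$ enters: over a Noetherian (hence coherent) ring an arbitrary product of flat algebras is flat, and surjectivity on $\Spec$ holds because every prime of $\cO(X)$ is hit by some factor. Without this step (or an equivalent one) your proof does not produce the global affine $Z$ and $Z'$ that the statement requires.
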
  
  We shall see {a geometric consequence of this result in Part III, Theorem \ref{T5}.}

 \begin{proof} $(1)$ In the case where $Y'\to Y$ corresponds to a local homomorphism of complete local domains, this follows from superposing a square like \eqref{CD4} on top of the square \eqref{CD1}; the Cohen-Macaulay property translates into the faithful flatness of $fg$ and $f'g'$ (relying upon \cite{H2},\cite{HH2},\cite{A2}, as said before).
 
 $(2)$ Let us reduce the general case to $(1)$. For any $x\in X$, the normalization $Y_{[x]} := (Y\times_X \Spec\, \hat \cO_{X,x})_{nor}$ is a finite disjoint sum of spectra of complete local domains. Same definition and property for $Y'_{[x']}$. By $(1)$, for any $x'\in X'$, there are commutative diagrams 
 \begin{equation}\label{CD5} \small{  \xymatrix   @-1.2pc{     Z'_{x'}  \ar[d]    \,  \ar@{->}[r]    & Z_{x'}  \ar[d]       
  \\  \;\, Y'_{[x']}  \ar[d]    \,  \ar@{->}[r]   & \;\, Y_{[h(x')]}  \ar[d] 
      \\  \Spec\, \hat \cO_{X',x'}  \,  \ar@{->}[r]    &  \Spec\, \hat \cO_{X,h(x')}  }}   \end{equation} 
where the composed vertical maps are faithfully flat. On the other hand, for any $x\notin h(X')$, there are faithfully flat compositions $Z_x\to Y_{[x]} \to \Spec\, \hat \cO_{X,x}$. Taking coproducts in ${\bf{Aff}}_k$ (i.e. spectra of products of rings, {not disjoint unions of spectra}), we obtain the desired diagram \eqref{CD3}, taking into account the fact that since $X$ and $X'$ are Noetherian, a (possibly infinite) coproduct of flat maps with target $X$ or $X'$ is flat.
\end{proof}
{  \begin{remark}\label{Rwf7} Fpqc weak functoriality is related to the following open question. {\it Given a directed inverse system $(Y_\lambda)$ of coverings of $X$ for the fpqc topology, is the limit $\lim Y_\lambda$ still a covering of $X$ for the fpqc topology?} If $X$ is regular and the $Y_\lambda$ are finite coverings, this holds true (once translated in terms of Cohen-Macaulay algebras, this follows from \cite[3.2]{D}). Note also that this is true if fpqc is replaced by the canonical topology.
  \end{remark} }

 \section*{\\ Part III. The finite topology on affine schemes. 
  Splinters and their fpqc analogs}  
 
We fix a Noetherian base ring $k$.

 \section{The finite and qfh topologies} 
 
  The \emph{finite topology} on ${\bf{Aff}}_k$ is the Grothendieck topology generated by 
 finite coverings.

 A morphism $ Y\stackrel{f}{\to} X$ in ${\bf{Aff}}_k$ is a covering for the finite topology if and only if there is a finite covering  $X'\to X $ such that the pull-back {$Y':=Y\times_X X'\to X'$} has a section. 
 
 The finite topology is the natural context for the problems discussed in this paper.  One may wonder why this elementary-looking topology did not already appear long ago, but only 
recently \cite{GK} (in the context of $k$-schemes of finite type).  One reason might be that, until Voevodsky's introduction of new topologies in the theory of motives, the consensus was that a ``well-behaved" Grothendieck topology on schemes had to be subcanonical. After Voevodsky's successful introduction of many non subcanonical topologies and their massive use by Ayoub and others, the idea emerged that a ``well-behaved"  topology might rather be one for which {\it one can compute local rings}, or conservative families of points, in order to be able to check concretely that morphisms of sheaves are isomorphisms. In this respect, several classical subcanonical topologies do not fulfill the latter criterium (fppf?, fpqc, canonical), whereas several non subcanonical topologies do, among them the finite topology \cite{GK}. In \cite{GK}, it is proven that {\it the local rings for the finite topology are the absolutely integrally closed domains}. 
 
 The topology generated by finite coverings and Zariski open coverings coincides with Voevodsky's qfh topology (extended by Rydh to the non Noetherian setting) \cite[8.4]{Ry}. The local rings for this topology, in the context of affine $k$-schemes of finite type, are the absolutely integrally closed {\it local} domains \cite{GK}.
Coverings for the qfh topology include fppf morphisms, since they admit flat quasi-sections (\cite[IV. 17.16.2]{EGA}), hence the qfh topology is finer than the fppf topology. In fact, the fppf topology is generated by Nisnevich coverings and finite flat coverings \cite[p.8]{GK} (see also \cite{Sch}).

This suggests the following naive question: is the canonical topology generated by the fpqc topology and by finite canonical coverings?  

 A weaker and more plausible question is: 
\begin{question}\label{q2} 
{{\it Is the canonical topology on ${\bf{Aff}}_k$ generated by the  fpqc topology and by  morphisms $Y\to X$ such that $\cO_X\to \cO_Y$ 
splits as $\cO_X$-modules? }}
\end{question}

This is at least true if the base is Noetherian; more precisely:

\begin{proposition}\label{rq2} Let $Y\stackrel{f}\to X$ be a canonical covering of a Noetherian scheme $X$ in ${\bf{Aff}}_k$. One can construct a commutative square in ${\bf{Aff}}_k$ 
$$\small\xymatrixcolsep{3em}
\xymatrixrowsep{1.5em}
\xymatrix{
Y'\ar[r]^{f'} \ar[d]& X'  \ar[d]^e\\
Y\ar[r]^{f} & X\\ }
$$ 
such that $e$ is faithfully flat while $\cO(X')\stackrel{f'^\ast}\to \cO(Y')$ 
 splits in $\cO(X')\lMod$. This construction is functorial in $f$. 
\end{proposition}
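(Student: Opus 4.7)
The input is the pure ring map $\alpha : R := \cO(X) \to S := \cO(Y)$ with $R$ Noetherian (purity by Proposition \ref{P:5eq}), and the goal is to produce a faithfully flat $R \to R'$ for which the base change $R' \to R'\otimes_R S$ splits in $R'$-Mod, functorially in $\alpha$.

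My plan is to take
\[
R' \;:=\; \prod_{\mathfrak{m} \in \mathrm{MaxSpec}(R)} \hat R_\mathfrak{m}, \qquad X' := \Spec R', \qquad Y' := Y\times_X X' = \Spec(R'\otimes_R S),
\]
with $e: X'\to X$ and $f' : Y'\to X'$ the obvious maps; the resulting square is tautologically cartesian. Faithful flatness of $e$ uses the Noetherian hypothesis twice: first, over a Noetherian (hence coherent) ring arbitrary products of flat modules are flat, so $R'$ is $R$-flat; second, every prime $\mathfrak{p} \subset R$ lies in some maximal $\mathfrak{m}$, and $\mathfrak{p}\hat R_\mathfrak{m} \subsetneq \hat R_\mathfrak{m}$ forces $\mathfrak{p} R' \subsetneq R'$, giving faithfulness.

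For the splitting, purity is stable under base change, so each component $\hat R_\mathfrak{m} \to \hat R_\mathfrak{m} \otimes_R S$ is pure; as $\hat R_\mathfrak{m}$ is a complete local Noetherian ring, Proposition \ref{P:Rlocs} yields a retraction $\sigma_\mathfrak{m}$ in $\hat R_\mathfrak{m}$-Mod. I assemble these by composing the canonical $R'$-linear map
\[
R'\otimes_R S \;\longrightarrow\; \prod_{\mathfrak{m}}\bigl(\hat R_\mathfrak{m} \otimes_R S\bigr),
\]
induced by the projections $R'\twoheadrightarrow \hat R_\mathfrak{m}$, with $\prod_\mathfrak{m}\sigma_\mathfrak{m}: \prod_\mathfrak{m}(\hat R_\mathfrak{m}\otimes_R S)\to \prod_\mathfrak{m}\hat R_\mathfrak{m}=R'$, to obtain an $R'$-linear map $\sigma : R'\otimes_R S \to R'$. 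A coordinatewise check gives $\sigma(r'\otimes 1)=r'$, so $\sigma$ is the required retraction of $f'^\ast$.

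The step I expect to be the most delicate is this assembly: since tensor product does not commute with infinite products, the canonical map $R'\otimes_R S \to \prod_\mathfrak{m}(\hat R_\mathfrak{m}\otimes_R S)$ is \emph{not} an isomorphism. One has to be content with it as a well-defined $R'$-linear comparison map; this is all that is needed to glue the $\sigma_\mathfrak{m}$ into an $R'$-linear retraction, even though $\sigma$ is not the naive ``product'' of the $\sigma_\mathfrak{m}$. Functoriality in $f$ is then immediate: $R'$ depends only on $X$, and $Y\mapsto Y\times_X X'$ is the pullback functor along $e$, so any morphism $Y_1\to Y_2$ of canonical coverings over $X$ induces a compatible morphism between the associated squares. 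The splitting $\sigma$ itself is not (and need not be) functorial, in line with the general non-canonicity of splittings of pure maps.
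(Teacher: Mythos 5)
Your construction is essentially the paper's: both take $X'$ to be the spectrum of a product of completed local rings of $X$ (flat over the Noetherian $R$ because products of flat modules over a coherent ring are flat, faithfully flat because every maximal ideal survives), and both obtain the splitting from the complete local case (Proposition \ref{P:Rlocs}) applied to the base changes of the pure map $\alpha$, which remain pure. Two small divergences are worth recording. First, the paper indexes the product over \emph{all} points $x\in X$ and sets $Y'=\Spec\,\prod_{x}\bigl(\cO(Y)\otimes_{\cO(X)}\hat{\cO}_{X,x}\bigr)$, so its square is merely commutative (not cartesian) but the retraction is literally the product of the local retractions; you instead keep $Y'=Y\times_X X'$ cartesian and glue the local retractions through the canonical comparison map $R'\otimes_R S\to\prod_{\mathfrak m}(\hat{R}_{\mathfrak m}\otimes_R S)$ --- this is correct, and you rightly flag it as the delicate point since tensor product does not commute with infinite products. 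Second, your indexing over $\mathrm{MaxSpec}(R)$ only yields functoriality for morphisms of coverings over a \emph{fixed} $X$, which is all you claim; the paper's functoriality is with respect to arbitrary commutative squares (both $Y$ and $X$ varying), and for that one needs all points, since a morphism $X_1\to X_2$ need not send closed points to closed points, so there is no natural map between the $\mathrm{MaxSpec}$-indexed products. If the stronger reading of ``functorial in $f$'' is intended, replace $\mathrm{MaxSpec}(R)$ by $\Spec R$ and your argument goes through verbatim. (Incidentally, you correctly invoke Proposition \ref{P:Rlocs} for the splitting, where the paper's text cites Proposition \ref{P:2}, which does not apply since $f$ is not assumed finite.)
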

\begin{proof} We set $X'= \Spec\, (\prod\limits_{ {x}\in X} {\hat{\cO}}_{X,x})$. Because $X$ is Noetherian, the codiagonal map $X'\stackrel{e}\to X$ is faithfully flat. We set $Y'= \Spec\, (\prod\limits_{ {x}\in X} ({\cO(Y)\otimes_{\cO(X)}\hat{\cO}}_{X,x}))$. By Proposition~\ref{Prop:2+}, the natural map
$f'^\ast := \prod \alpha_{ {\hat{\cO}}_{X,x}}:\,  \cO(X') \to \cO(Y')$ splits in $R'\lMod$.

This construction is clearly functorial: if $$\small\xymatrixcolsep{3em}
\xymatrixrowsep{1.5em}
\xymatrix{
Y_1\ar[r]^{f_1}\ar[d]  & X_1 \ar[d] \\
Y_2\ar[r]^{f_2} & X_2\\ }
$$ is a commutative square in ${\bf{Aff}}_k$, $\cO({X'_2})\to \cO({X'_1})$ (resp. $\cO({Y'_2})\to \cO({Y'_1})$) is built by sending the component of the product indexed by $x_2\in X_2$ (resp. $y_2\in Y_2$) to the component indexed by its image in $X_1$ (resp. $Y_1$). 
 \end{proof}

{We close this section with the following elementary lemma, which is however of little use:

\begin{lemma} Any finite covering $Y\to X$ in ${\bf{Aff}}_k$ is a composition $Y\to W\to X,$ where $Y\to W$ is a closed immersion and $W\to X$ is a finite flat covering.
\end{lemma}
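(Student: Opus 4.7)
The plan is to translate to commutative algebra and construct $W$ explicitly as the spectrum of a finite free $R$-algebra that surjects onto $\cO(Y)$, using the integrality of a generating set. Write $X=\Spec\, R$, $Y=\Spec\, S$; since $f$ is finite, $S$ is a finitely generated $R$-module, say by elements $s_1,\dots,s_n$, each of which is integral over $R$. Choose monic polynomials $p_i(T)\in R[T]$ with $p_i(s_i)=0$ and set
\[
  W \;:=\; \Spec\Bigl(\,R[T_1,\dots,T_n]\big/\bigl(p_1(T_1),\dots,p_n(T_n)\bigr)\Bigr).
\]

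First I would verify that $W\to X$ is a finite free covering. The standard monomial argument shows that $R[T_1,\dots,T_n]/(p_1(T_1),\dots,p_n(T_n))$ is a free $R$-module with basis the monomials $T_1^{i_1}\cdots T_n^{i_n}$ where $0\le i_j<\deg p_j$. In particular it is of positive rank, hence faithfully flat over $R$, so $W\to X$ is surjective (and of course finite).

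Next I would factor $f$ through $W$ by sending $T_i$ to $s_i$, producing a ring map $R[T_1,\dots,T_n]/(p_1,\dots,p_n)\to S$. Since the $s_i$ generate $S$ as an $R$-module, this map is surjective, so $Y\to W$ is a closed immersion, and the composition $Y\to W\to X$ recovers $f$ by construction.

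There is no serious obstacle here; the only point to watch is the choice of an integral relation witnessing integrality of each generator, which is automatic because $R\to S$ is finite. (As the authors note, the lemma is elementary; its limited usefulness stems from the fact that the flat covering $W\to X$ one obtains is rather crude and unrelated to the geometry of $f$.)
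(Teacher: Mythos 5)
Your construction is exactly the one the paper uses: present $\cO(Y)$ by module (hence algebra) generators $s_i$ with monic relations $p_i$, take $W=\Spec\bigl(R[T_1,\dots,T_n]/(p_1(T_1),\dots,p_n(T_n))\bigr)$, note it is finite free over $R$ by the monomial basis, and observe that $T_i\mapsto s_i$ is surjective so $Y\to W$ is a closed immersion. The proof is correct and matches the paper's argument, with the routine verifications spelled out.
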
 
 
Indeed, if $\underline s = (s_i)$ is a finite family of generators the $\cO(X)$-algebra $\cO(Y)$, and $p_i\in \cO(X)[x_i]$ are monic polynomials such that $p_i(s_i)=0$, one can take $W =\Spec\, \cO(X)[\underline x]/(\underline p)$.}
\qed

   \section{When every finite covering is canonical: splinters}
   \subsection{} Comparing the finite and canonical topologies suggests the following notion\footnote{the notion/name of ``splinter" goes back to \cite{FMa} (Theorem \ref{T1} was already known when $k$ is a field). It is closely related to the notion of ideally integrally closed domains studied in \cite[\S 2]{H1}.}:

    \begin{definition} {{ An affine Noetherian $k$-scheme $X= \Spec \, R$ is called a \emph{splinter} if every finite covering (equivalently, any covering for the finite topology) is a canonical covering. }}  \end{definition}  By Proposition \ref{P:2}, this means that {\it every faithful finite $R$-algebra splits as an $R$-module} (one can even replace ``finite" by ``integral" because filtered colimits of pure ring maps are pure). 
   
According to Theorem \ref{T1}, any {\it regular} $X$ is a splinter. One may reinterpret the theory of splinters as the discussion of the (non-)optimality of this result: namely, beyond regular schemes, which (affine Noetherian) schemes are splinters? 

We first record the following easy and well-known facts:

\begin{proposition} \begin{enumerate} 
\item The notion of splinter is Zariski-local.
 \item Splinters are normal.
\item The converse holds in characteristic $0$. 
 \end{enumerate}\end{proposition}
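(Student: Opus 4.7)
I will establish the local criterion: $R$ is a splinter if and only if $R_{\mathfrak{m}}$ is a splinter for every maximal ideal $\mathfrak{m}$. The gluing direction is the main content. Given a finite faithful $R$-algebra $S$, an $R$-linear retraction of $R \to S$ exists iff the evaluation-at-$1$ map $\Hom_R(S, R) \to R$ is surjective. Since $R$ is Noetherian and $S$ is finite (hence finitely presented), $\Hom_R(S, -)$ commutes with localization, so the cokernel $C$ of $\mathrm{ev}_1$ satisfies $C_{\mathfrak{m}} = \Coker\bigl(\Hom_{R_{\mathfrak{m}}}(S_{\mathfrak{m}}, R_{\mathfrak{m}}) \to R_{\mathfrak{m}}\bigr)$, which vanishes by the splinter hypothesis on $R_{\mathfrak{m}}$; hence $C = 0$. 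The converse is handled by spreading out: given a finite faithful $R_{\mathfrak{m}}$-algebra $T$ with generators $t_j$, rescaling by a high power of $g \notin \mathfrak{m}$ that clears denominators in the integral relations produces elements $u_j := g^N t_j$ integral over $R$; then $S := R[u_j] \subseteq T$ is a finite $R$-subalgebra with $S_{\mathfrak{m}} = T$, and the splinter property of $R$ yields a retraction that localizes appropriately.

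\textbf{(2) Splinters are normal.} I argue that a splinter $R$ is reduced and integrally closed in its total quotient ring $Q(R)$. For reducedness: if $\sqrt 0 \neq 0$, the closed immersion $\Spec R_{\mathrm{red}} \hookrightarrow \Spec R$ is a finite surjective morphism --- that is, a finite covering --- whose associated ring map $R \twoheadrightarrow R_{\mathrm{red}}$ is not injective, hence not universally injective, hence not a canonical covering, contradicting the splinter condition. For integral closure: let $x = b/a \in Q(R)$ be integral over $R$ with $a$ a nonzerodivisor; then $R \hookrightarrow R[x] \subseteq Q(R)$ is a finite faithful extension, so the splinter property gives an $R$-linear retraction $\sigma \colon R[x] \to R$. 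Applying $\sigma$ to the identity $a x = b \in R$ yields $a \sigma(x) = b$, so $\sigma(x) = b/a = x$ in $Q(R)$, whence $x \in R$. Since $R$ is reduced and Noetherian, $Q(R) = \prod_i K_i$ is a finite product of residue fields at the minimal primes; the idempotent projectors are integral over $R$ and hence lie in $R$ by the preceding step, realizing $R$ as a finite product of integrally closed domains, i.e.\ as a normal ring.

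\textbf{(3) Converse in characteristic zero.} Assume $R$ is a normal Noetherian $\mathbb{Q}$-algebra. By (1) and the product decomposition from (2), I reduce to the case where $R$ is a normal domain with fraction field $K$. Given a finite faithful extension $R \to S$, set $n := \dim_K(S \otimes_R K) \geq 1$; then $n$ is invertible in $R$ as $R \supseteq \mathbb{Q}$. Define the trace $\mathrm{tr} \colon S \to K$ by $\mathrm{tr}(s) := \mathrm{tr}_{S \otimes_R K / K}(\mathrm{mult}_s)$. It is $R$-linear, and its values are sums of algebraic conjugates of integral elements of $S$, hence integral over $R$ and therefore in $R$ by normality. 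Since $\mathrm{tr}(1) = n$, the map $n^{-1}\mathrm{tr} \colon S \to R$ is an $R$-linear retraction of $R \to S$.

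The most delicate point is ensuring in (1) that, in the restriction direction, the spread-out subalgebra $S \subseteq T$ is faithful over $R$: the map $R \hookrightarrow R_{\mathfrak{m}}$ need not be injective for a general reduced $R$. I expect to circumvent this by appealing to (2): after establishing the reduced case of (2), $R_{\mathfrak{m}}$ is a localization of a single normal-domain factor of $R$, where faithfulness is automatic.
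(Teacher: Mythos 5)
Your proposal is correct and complete. For comparison: the paper does not actually write out proofs of (1) and (2) — it cites \cite[2.1.3, 2.1.2]{DT} — and for (3) it only sketches the argument (replace $Y$ by its normalization to reduce to integral $Y$, then divide the trace by the degree). Your arguments for (2) and (3) are the standard ones the paper alludes to, and your proof of (1) is the expected localization argument. Two points where your route differs slightly, both to its advantage: in (3) you apply the trace form of the generic fibre $S\otimes_R K$ directly to the possibly non-reduced finite $K$-algebra, using only that the eigenvalues of multiplication by an integral element are integral over the normal domain $R$; this avoids the paper's normalization step and hence any appeal to finiteness of integral closures. In (1) you correctly isolate the one delicate point, namely that the spread-out subalgebra $S\subseteq T$ must be faithful over $R$, and you resolve it by first proving (2) so that $R$ decomposes as a finite product of normal domains; since your proof of (2) nowhere uses (1), there is no circularity, but you should state explicitly that the order of proof is (2) before the restriction direction of (1). (A last cosmetic remark: in that restriction step one applies the splinter hypothesis to the direct factor $R_i$ of $R$ containing $\mathfrak{m}$, via the finite covering $S\times\prod_{j\neq i}R_j$ of $R$; this is implicit in your reduction but worth a line.)
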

For $(1)(2)$, see for instance \cite[2.1.3, 2.1.2]{DT}. For $(3)$, see the discussion {after Theorem \ref{T1}. Example \ref{ex 3.4} shows that the converse no longer holds in positive or mixed characteristic. 
 It is known that an excellent affine local (Noetherian) $k$-scheme $ \Spec \, R$ is a splinter if and only if so is its completion $ \Spec \, \hat R$ \cite[Th. C]{DT}.} \qed

\subsection{}  What are splinters in characteristic $p$? 
 There is an extended literature on the subject, which belongs to the field of ``$F$-singularities".
  
 \smallskip A conjecture predicts that {\it splinter = $F$-regular}, a kind of characteristic $p$ analog of log-terminal singularities\footnote{sometimes, $F$-regular is replaced by strongly $F$-regular, which means that for any nonzero $a\in R$ there is a power $F^e$ of $F$ such that the composition of $F^e$ followed by multiplication by $a$ splits, cf. \cite[2.3]{LMa}; these notions coincide in the $\mathbb Q$-Gorenstein case \cite[3.5]{TW} On the analogy between $F$-regular and log-terminal singularities, see \cite{HaW}.  In mixed characteristic, using perfectoid big Cohen-Macaulay algebras \cite{MaS}, there is an analog of $F$-regularity thanks to Ma and Schwede's work. }. This conjecture has been solved in the $\mathbb Q$-Gorenstein situation \cite{Si}.
   
  { \begin{example}\label{ex 9.3} The spectra of a large class of cluster algebras  
  - the so-called locally acyclic cluster algebras - are splinters \cite{BMRS}. \end{example} }
  
 \begin{theorem}\label{T4} 
 In characteristic $p$, excellent splinters are Cohen-Macaulay (Hochster and Huneke)\footnote{in fact, one can replace excellent by locally excellent, i.e. such that the localization of the ring at any maximal ideal is excellent.}.
 The same is true in mixed characteristic (Bhatt). 
 \end{theorem}
 
\begin{proof} In characteristic $p$, this is a straightforward consequence of the theorem of Hochster and Huneke: for any excellent local domain $R$ of characteristic $p$, its absolute integral closure $R^+$ (i.e. the integral closure of $R$ in an algebraic closure of its fraction field) is a Cohen-Macaulay $R$-algebra  \cite{HH} (see also \cite{HL})\footnote{this no longer holds in characteristic $0$ if $\dim R \geq 3$; nor in mixed characteristic $(0,p)$ (by localizing at a prime not containing $p$)  if $\dim R \geq 4$.}.
  Indeed, since $R^+$ is a filtered colimit of finite extensions of the splinter $R$, it is pure over $R$. It follows that $R$ itself is a Cohen-Macaulay ring \cite[2.1.g]{HH2}. 
  
\smallskip

In mixed characteristic,   Bhatt recently proved the analog of the theorem of Hochster and Huneke  (\cite[Th. 1.1]{B4}) and the second assertion follows in a similar way (\cite[Cor. 5.9]{B4}). Bhatt's proof relies on prismatic techniques and on a new Riemann-Hilbert correspondence in mixed characteristic.
  \end{proof}

      \subsection{}\label{subs 7.3} Let us come back to the general situation and start with a simple observation: if $\tau'$ and $\tau$ are Grothendieck topologies on a category which admits fiber products, the condition that any $\tau'$-covering of an object $X$ is a $\tau$-covering can be tested after replacing $X$ by any $\tau$-covering. In particular (for $\tau'=$ finite, $\tau=$ canonical), {\it $X$ is a splinter if for some canonical covering $X'\to X$, $X'$ is a splinter.}

   \begin{example}\label{ex 7.1}  The quadric cone $X$ over a field of characteristic $\neq 2$ is a splinter by the argument explained in Example~\ref{ex 4.7}: the quotient map $\mathbb A^2_k\to X$ is a canonical covering, and $\mathbb A^2_k$ itself is a splinter,
     since any normal finite covering of $\mathbb A^2_k$ is faithfully flat. 
 \end{example} 

The following result summarizes several previous works on canonical coverings by a regular scheme. 
 
\begin{theorem}\label{T5} Let $X$ be an affine Noetherian $k$-scheme. Assume that 

$\,\;(0)$ there exists a canonical covering $Y\stackrel{f}{\to} X$  in ${\bf{Aff}}_k$ with $Y$ regular. 

Then:
\begin{enumerate} 
\item  $X$ is a splinter, hence normal;
\item $X$ is Cohen-Macaulay;
\item (Assuming $X$ excellent) $X$ has  rational singularities.
\end{enumerate}
\end{theorem}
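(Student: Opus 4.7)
\medskip

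\noindent\textbf{Proof plan.} I would attack the three assertions in turn, leveraging earlier results of the paper as much as possible.

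For (1), the plan is to combine Theorem~\ref{T1} with the observation recorded in \S\ref{subs 7.3}. Since $Y$ is regular, Theorem~\ref{T1} shows that $Y$ is a splinter. The key point in \S\ref{subs 7.3} says that if $X'\to X$ is a canonical covering and $X'$ is a splinter, then $X$ is a splinter (apply the transitivity of the finite and canonical topologies: any finite covering of $X$ pulled back to $X'$ becomes a finite canonical covering, and composition with $X'\to X$ gives a canonical covering of $X$). Applied to $Y\to X$, this gives that $X$ is a splinter. Normality is then a general property of splinters, already recorded in the proposition preceding \ref{ex 7.1}.

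For (2), I would reduce to the local situation via Lemma~\ref{L2}. Given any point of $X$, after localizing we may assume $X$ is local; Lemma~\ref{L2} produces a closed point $y\in Y$ such that $\Spec \cO_{Y,y}\to X$ is still a canonical covering, and $\cO_{Y,y}$ is a regular (hence Cohen--Macaulay) local ring. It then suffices to invoke the lemma recalled in \S3 of the paper asserting that finite canonical coverings descend the Cohen--Macaulay property: any secant sequence $\underline{a}$ in $R=\cO_X$ is secant in $S=\cO_{Y,y}$ (dimensions match under finiteness), hence regular since $S$ is regular, hence already regular in $R$ by purity together with \cite[2.1.g]{HH2}. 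Thus $R$ is Cohen--Macaulay.

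For (3), I would argue by characteristic. In characteristic $0$, by (1) $\cO_X$ is a direct $\cO_X$-summand of $\cO_Y$, so that $X$ is a direct summand of a regular scheme; Boutot's theorem (more precisely the Kempf--Boutot direct-summand criterion) then yields rational singularities. In characteristic $p$, one combines (1) and (2): an excellent splinter which is Cohen--Macaulay is $F$-rational by Smith's characterization, and Smith's theorem shows that $F$-rational excellent schemes are pseudo-rational, which in this setting coincides with the Lipman--Teissier notion of rational singularities. In mixed characteristic, I would appeal to the recent work of Ma--Schwede (and Bhatt) on BCM-rational singularities: splinters that are Cohen--Macaulay admit sufficiently functorial BCM-algebras to force BCM-rationality, and one concludes via the Ma--Schwede comparison with rational singularities in the Lipman--Teissier sense.

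\textbf{The main obstacle} will be part (3) outside characteristic $0$: Boutot's argument is essentially formal once (1) is known, but in positive and mixed characteristic the passage \emph{splinter $+$ Cohen--Macaulay $\Rightarrow$ rational} relies on deep and characteristic-specific input (the theory of tight closure and $F$-rationality on one side, perfectoid BCM-algebras and the recent mixed-characteristic analogs on the other), and the uniform statement hides this disparity.
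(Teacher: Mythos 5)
Your treatment of (1) coincides with the paper's (Theorem~\ref{T1} plus the observation opening \S\ref{subs 7.3}), and your treatment of (3) is essentially the paper's as well: both amount to quoting the characteristic-by-characteristic results (Boutot/Schoutens, Smith, Heitmann--Ma) that canonical coverings by regular schemes yield pseudo-rational singularities, together with Kov\'acs's identification of pseudo-rational with rational.

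The gap is in (2). Hypothesis $(0)$ does \emph{not} assume that $f$ is finite, and the lemma you invoke from \S 3 is about \emph{finite} canonical coverings: its proof needs finiteness precisely at the step you parenthesize (``dimensions match under finiteness''), i.e.\ to guarantee that a secant sequence of $R=\cO(X)$ remains secant -- hence, $S$ being Cohen--Macaulay, regular -- in $S$. For a non-finite pure local map $R\to S$ with $S$ regular there is no elementary reason why a system of parameters of $R$ should become a regular sequence in $S$; that assertion is essentially the statement that $S$ is a big Cohen--Macaulay $R$-algebra. Note that the crucial applications of Theorem~\ref{T5}(2) are exactly the non-finite ones (e.g.\ $R^G\subset R$ for a reductive group, i.e.\ the Hochster--Roberts theorem), and if your argument were correct it would give an elementary proof of Hochster--Roberts, which is known to require deep input. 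Localizing via Lemma~\ref{L2} does not repair this: $\Spec\,\cO_{Y,y}\to X$ is still not finite.

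The paper's route for (2) is different and is where the real work hides: one reduces to $\cO(X)$ a complete local domain, uses Cohen's structure theorem to exhibit a \emph{finite} covering $X\stackrel{h}\to W$ with $W$ regular, and then invokes fpqc weak functoriality (Theorem~\ref{T3}, i.e.\ weakly functorial big Cohen--Macaulay algebras) to build a diagram in which a faithfully flat map factors through $h$ preceded by a canonical covering; Lemma~\ref{L1} then forces $h$ to be flat, and finite flat over regular gives Cohen--Macaulay. If you want to keep your outline, you must either add the hypothesis that $f$ is finite (which weakens the theorem to the point of losing its main applications) or replace the descent-of-CM lemma by the big Cohen--Macaulay / weak functoriality input.
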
 

\begin{proof}  $(1)$ is a straightforward consequence of Theorem~\ref{T1} combined with the previous simple observation \ref{subs 7.3}. 

 \smallskip\noindent $(2)$: after reducing to the case where $\cO(X)$ is a complete local domain and applying Cohen's theorem if necessary, one may assume that $X$ is a finite covering of a regular $W$, and one has to show that $X\stackrel{h}\to W$ is flat. By fpqc weak functoriality (Theorem \ref{T3}), there is a commutative diagram 
 \begin{equation}\label{CD9}\xymatrix   @-1.2pc{     Z'  \ar[d]_{g'}   \,  \ar@{->}[r]^{f'}    & Z  \ar[d]^g      
  \\  \;\, Y  \ar[d]_{id_Y}   \,  \ar@{->}[r]^f   & \;\, X \ar[d]^{h} 
      \\ Y \,  \ar@{->}[r]    & W  . }   \end{equation} such that $g'$ and $hg$ are faithfully flat. Since $f$ and $g'$ are canonical coverings, so is  $fg'=gf'$, hence also $g$. Since $hg$ is flat, it follows that $h$ is flat (Lemma \ref{L1})\footnote{before \cite{A2}, Heitmann and Ma got just ``enough" weak functoriality to work out this argument in the mixed characteristic case. Notice that the weak ``weak functoriality" statement in \cite[4.4]{A1} would already be enough to settle the issue, except for an assumption of separability of the extension of residue fields.}. 
  
\smallskip\noindent $(3)$: after reduction to the local case via Lemma~\ref{L2},  it is proved that $X$ has ``pseudo-rational" singularities, respectively in \cite{Sc} in characteristic $0$ (extending Boutot's work \cite{Bo}\footnote{Boutot used Grauert-Riemenschneider's vanishing theorem, assuming that $f$ is essentially of finite type over $k$. Schoutens combines Smith's techniques in characteristic $p$ with ultraproducts.}), in \cite{Sm} in positive characteristic, and in \cite{HeMa} in mixed characteristic (using perfectoids); finally, it is proved in \cite{Ko} that ``pseudo-rational" singularities is equivalent to rational singularities. \end{proof} 
 
 Let us briefly comment. The classical notion of rational singularity postulates  a resolution of singularities $\tilde X {\to} X\,$: it prescribes that $H^i(\tilde X, \cO_{\tilde X})=0$ for $i>0$. Recently,  Kov\'acs showed that this property extends to any ``Macaulayfication" $\tilde X$. He redefined in this way the notion of rational singularities in arbitrary characteristic and proved that it is equivalent to the earlier (a priori weaker) notion of pseudo-rational singularity introduced by Lipman and Teissier \cite{Ko}.

 \smallskip   {\footnotesize{ \begin{remark} There is a derived version of the notion of splinter: {\it an affine Noetherian $k$-scheme $X $ is called a \emph{derived splinter} if for any proper surjective morphism $ Y \stackrel{f}{\to} X$, $\cO_X \to Rf_\ast \cO_Y$ splits in $D^b({\cO_X})$.}
  
 This notion was introduced by Bhatt in light of De Jong's derived version of the direct summand conjecture which, in these terms, predicted that any regular $X$ is a derived splinter.  Refining the perfectoid techniques of  \cite{A1}, Bhatt proved this {\it derived direct summand conjecture} \cite{B3}. 
 
 According to \cite{E}, $\cO_X \to Rf_\ast \cO_Y$ splits if and only if $f$ is an effective derived descent morphism. In this perspective, an equivalent form of Bhatt's theorem reads: any proper surjective morphism  with target a regular affine scheme is an effective derived descent morphism.
  
 Any derived splinter is a splinter. In \cite{B1}, resp. \cite[Rem. 1.7]{B4}, Bhatt proved the converse in positive characteristic, resp. in mixed characteristic using techniques from \cite{BS}. In characteristic $0$, this fails: splinter = normal, while derived splinters (essentially of finite type over $k$) correspond to rational singularities \cite{Ko}\cite{B1}.
 
 As an application, let us cite Ma and Schwede's characterization of regularity of $X$ by way of the finite projective dimension of $Rf_\ast \cO_Y$ for a regular alteration $ Y \stackrel{f}{\to} X$ \cite{MaS2}.
 \end{remark} }}

\section{Fpqc analogs of splinters?}  
  \subsection{}  The problematics of splinters deals with the relationship between the finite topology and the canonical topology on affine schemes. What if one replaces the canonical topology by the fpqc topology? 

{ \begin{question}\label{q3} {{\it Which affine Noetherian schemes have the property that every finite covering is a covering for the fpqc topology?}} 
(Equivalently: which Noetherian rings $R$ have the property that for every finite extension $S$, there is an $S$-algebra faithfully flat over $R$?)\end{question}}

 \smallskip Let us call such objects ``fpqc analogs of splinters"\footnote{one might wonder whether fpqc analog of splinters also have the property that integral coverings are fpqc coverings. This is true for regular schemes as explained in Remark~\ref{r 5.8}. We do not know the answer in general; a filtered colimit argument does not work formally as explained in Remark~\ref{Rwf7}. }. Theorem \ref{T2} says that regular schemes have this property. 
 {It turns out that in positive characteristic, this theorem is sharp: 
 
 \begin{proposition} In positive characteristic, the only $F$-finite ``fpqc analogs of splinters" are regular rings. 
 \end{proposition}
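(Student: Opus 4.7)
The proof reduces to a one-line application of Theorem~\ref{T6}. Let $X=\Spec R$, with $R$ an $F$-finite Noetherian ring of characteristic $p>0$ which is an ``fpqc analog of splinter''. I would apply the hypothesis to the Frobenius endomorphism $F_X: X\to X$ itself, corresponding to the ring map $R\to R$, $r\mapsto r^p$. The $F$-finiteness of $R$ means precisely that this ring map is finite, so $F_X$ is a finite morphism. Moreover $F_X$ is surjective on the underlying topological space: for any prime ideal $\mathfrak{p}\subset R$,
\[
F^{-1}(\mathfrak{p})=\{r\in R : r^p\in\mathfrak{p}\}=\mathfrak{p},
\]
since $\mathfrak{p}$ is radical. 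In particular $F_X$ induces the identity on $\mathrm{Spec}\,R$, so it qualifies as a \emph{finite covering} of $X$ in the sense of this paper.

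Having verified that $F_X$ is a finite covering, the hypothesis that $R$ is an ``fpqc analog of splinter'' applies to $F_X$ itself and yields at once that $F_X$ is a covering map for the fpqc topology. Theorem~\ref{T6} then concludes that $R$ is regular. There is essentially no obstacle: the whole content of the proposition is already packaged into Theorem~\ref{T6}, and the sole purpose of the $F$-finite assumption is to ensure that Frobenius is a legitimate finite covering to which the fpqc-splinter hypothesis can be applied. (Without $F$-finiteness, Frobenius would still be surjective but not a finite morphism, and the argument would break down at the very first step.)
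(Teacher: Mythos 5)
Your proof is correct and is exactly the paper's argument: the paper simply says the proposition is an immediate consequence of Theorem~\ref{T6}, and your write-up spells out the implicit steps ($F$-finiteness makes Frobenius a finite covering, the hypothesis makes it an fpqc covering, and Theorem~\ref{T6} concludes regularity). No gap; the converse direction (regular $\Rightarrow$ fpqc analog of splinter) is Theorem~\ref{T2}, already recorded in the surrounding text.
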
   
 
 \begin{proof} This is an immediate consequence of Theorem \ref{T6}. 
 \end{proof} }
  
    {\footnotesize{ \begin{remark} The notion of ``fppf analog of a splinter" is of very limited scope. 
    For instance, the spectrum $X = \Spec\, R\,$ of a regular complete local domain of dimension $d$ cannot be such an analog if $d \geq 3$ in equal characteristic (resp. if $d\geq 4$ in mixed characteristic). 
    Namely, assume by contradiction that for every finite extension $S$ of $R$, there is an $S$-algebra $T_S$ which is faithfully flat of finite presentation over $R$.  Arguing as in Example~\ref{ex 5.1}, one may assume that $T_S$ is a domain and a finite extension of $R$.    
  This implies that $R^+ =  {\rm{colim}}\, T_S$ is faithfully flat over $R$, hence a Cohen-Macaulay $R$-algebra. 
This is wrong in characteristic $0$ if $d \geq 3$, and in mixed characteristic if $d\geq 4$. In positive characteristic, instead, the result follows from Example  \ref{ex 5.1}. 
      \end{remark}  }}
 
   \subsection{}\label{reg} In search of potential examples of {(affine Noetherian)} non regular ``fpqc analogs of splinters", one might look for a covering $Y\stackrel{f}{\to} X$  for the fpqc topology, with $Y$ regular and $X$ non regular (such an $f$ cannot be flat, by 
   \cite[IV. 6.5.2]{EGA}). Following the observation at the beginning of subsection \ref{subs 7.3} (for $\tau' =$ finite, $\tau =$ fpqc),  Theorem \ref{T2} applied to the fibered product of $f$ with an arbitrary finite covering of $X$ would show that $X$ is such an example. 
   
     \subsection{} This suggests a basic question about the fpqc topology.
Many properties of schemes and morphisms of schemes descend with respect to faithfully flat morphisms (i.e. with respect to the fpqc {\it pretopology}); this is one of Grothendieck's great insights (\cite[IV. 2.2]{EGA}). 
As for morphisms, the properties under consideration are stable by base change; therefore, the questions whether they descend with respect to the fpqc {\it pretopology} or with respect to the fpqc {\it topology} are equivalent. 

As for schemes, this is quite another story. In \cite[IV. 6]{EGA}, there is a long list of properties of schemes which descend with respect to the fpqc {\it pretopology}. Whence the question:
{\it which properties of  
schemes descend along morphisms which are coverings for the fpqc topology} (but not necessarily faithfully flat morphisms)?

   Subsection \ref{reg} points out the specific issue of regularity,  which is settled by the following theorem.
   
      \begin{theorem}\label{T7}  Let $Y\stackrel{f}{\to} X$ be a morphism of affine Noetherian schemes which is a covering for the fpqc topology. 
      \begin{enumerate} 
      \item (Ma) If $\,Y$ is regular, so is $X$.
      \item  If in addition $f$ is a finite covering and $Y$ is connected, then $f$ is flat.  
      \end{enumerate}
   \end{theorem}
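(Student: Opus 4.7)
The plan is to derive part (1) from the recently established Bhatt--Iyengar--Ma theorem, and to settle part (2) by a going-down plus miracle flatness argument.

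For part (1), set $R=\cO(X)$ and $S=\cO(Y)$. The fpqc-covering hypothesis supplies an $S$-algebra $T$ which is faithfully flat (hence pure) over $R$. I claim that the first arrow $R\to S$ is already pure: if $m\in M$ maps to $0$ in $S\otimes_R M$, then its image in $T\otimes_R M\simeq T\otimes_S(S\otimes_R M)$ is also $0$, and purity of $R\to T$ forces $m=0$. The conclusion then follows from the Bhatt--Iyengar--Ma theorem, which asserts that any Noetherian subring $R$ which is pure in a regular Noetherian ring $S$ is itself regular. This theorem is the main obstacle: its proof is nontrivial and relies on derived/perfectoid input, but I would cite it as a black box.

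For part (2), part (1) gives $R$ regular. Since $R\to S$ is pure and finite, Proposition~\ref{P:2} yields a splitting in $R\lMod$, so in particular $R\hookrightarrow S$. Now $Y$ is regular and connected, hence normal and connected, hence integral (a connected normal Noetherian affine scheme is irreducible); so $S$ is a domain, and therefore so is $R$. Thus $R\subset S$ is a finite extension of domains with $R$ normal, and the classical going-down theorem of Krull yields $\mathrm{ht}(\mathfrak q)=\mathrm{ht}(\mathfrak p)$ whenever $\mathfrak q\subset S$ lies over $\mathfrak p\subset R$.

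To conclude, I localize at such a pair. Then $R_{\mathfrak p}\to S_{\mathfrak q}$ is a local finite map of regular local rings with $\dim S_{\mathfrak q}=\dim R_{\mathfrak p}$ and zero-dimensional closed fibre, so Matsumura's miracle flatness criterion applies and gives flatness of $R_{\mathfrak p}\to S_{\mathfrak q}$. Since $R\to S$ is finite, $S_{\mathfrak p}$ decomposes as the product of the $S_{\mathfrak q}$ for $\mathfrak q\mid\mathfrak p$, so flatness at each such pair forces flatness of $R\to S$. The only delicate point in this last step is the extraction of equidimensionality, which is precisely what going-down provides; everything else is routine.
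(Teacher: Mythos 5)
Part (1) as written does not work: it rests on a misstatement of the Bhatt--Iyengar--Ma theorem, and the misstated version is false. It is \emph{not} true that a Noetherian ring which is pure (universally injective) in a regular Noetherian ring is itself regular: the split, hence pure, inclusion $k[[x^2,xy,y^2]]=k[[x,y]]^{\mathbb Z/2\mathbb Z}\subset k[[x,y]]$ has regular target and non-regular source --- this is precisely Example \ref{ex 4.7}/\ref{exF} of the present paper, and indeed the distinction between ``pure'' and ``fpqc covering'' is the paper's central theme. What \cite[Cor.\ 2.2]{BIM} actually provides is a criterion involving an $S$-module $U$ with $\mathfrak m_S U\neq U$ and Tor-vanishing against finite $R$-modules; the hypothesis that makes it bite here is the existence of the $S$-algebra $T$ which is \emph{faithfully flat over $R$}, so that $\operatorname{Tor}^R_i(T,M)=0$ for all $i>0$ and every finite $R$-module $M$. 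You correctly extract purity of $R\to S$ from $T$ and then discard $T$ --- but $T$ is the essential datum. The repair is to localize $R$ at a maximal ideal and $S$ at a maximal ideal above it, and apply the BIM criterion with $U=T$ and $d=\dim S$ (finite flat dimension of the residue field of the regular ring $S$), concluding that every finite $R$-module has finite free resolution, whence $R$ is regular; this is the paper's argument.

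Part (2), granting a correct proof of (1), is sound and takes a genuinely different and more elementary route than the paper's: going-down for the finite extension of domains $R\subset S$ with $R$ normal gives $\operatorname{ht}\mathfrak q=\operatorname{ht}\mathfrak p$, and miracle flatness then applies to $R_{\mathfrak p}\to S_{\mathfrak q}$. The paper instead proves (2) without invoking (1): it gets $R$ Cohen--Macaulay from Theorem \ref{T5}, shows the $\mathfrak m$-adic completion $\hat T$ is a Cohen--Macaulay $S$-algebra, hence faithfully flat over the regular ring $S$, and descends flatness of $R\to S$ from $\hat T$ --- recovering regularity of $R$ as a byproduct. One small repair to your last step: $S_{\mathfrak p}$ is semi-local but is not in general a product of the localizations $S_{\mathfrak q}$; one should instead say that flatness of the finite $R_{\mathfrak p}$-module $S_{\mathfrak p}$ is detected at the finitely many maximal ideals of $S_{\mathfrak p}$, i.e.\ at the $S_{\mathfrak q}$ for $\mathfrak q$ above $\mathfrak p$.
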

    \begin{proof} Let us write $R=\cO(X), S= \cO(Y)$. We may assume that $R$ is local with maximal ideal $\frak m$, and, replacing $S$ by its localization at some maximal ideal above $\frak m$, that $R\to S$ is a local homomorphism. Let $T$ be an $S$-algebra which is faithfully flat over $R$. 
    
    $(1)$\footnote{there are two well-known special cases where this holds: if $f$ is flat (by 
    \cite[IV. 6.5.2]{EGA}), or if $f$ admits a section (by \cite[IV .0.17.3.3]{EGA}). } This is a straightforward consequence of \cite[Cor 2.2]{BIM}, which claims the following: if $S/\frak m_S$ is of finite flat dimension $d$,  $U $ is an $S$-module such that $\frak m_S. U \neq U$, and $M$ is a finite $R$-module such that ${\rm{Tor}}^R_i(U, M)=0$ for $d+1$ consecutive values of $i$,  then $M$ has a finite free resolution. This applies to our case, with $d = \dim S$ since $S$ is regular, with $U=T$ which is faithfully flat over $R$, and with any $M$. We conclude that $R$ is regular. 
    
 $(2)$  By Theorem \ref{T5}, we know that $R$ is Cohen-Macaulay. The $\frak m$-adic completion $\hat T$ remains faithfully flat over $R$ (\cite[1.1.1]{A1}). In particular, $\hat T$ is a Cohen-Macaulay $R$-algebra, and since it is a $\frak m$-adically complete algebra over the finite  extension $S$ of $R$, it is also a Cohen-Macaulay $S$-algebra \cite{BaS}. Since $S$ is regular, $\hat T$ is faithfully flat over $S$. By Lemma~\ref{L1},
 it follows that  $R\to S$ is faithfully flat (and by \cite[IV. 6.5.2]{EGA} that $R$ is regular, which gives a second proof of point (1) under this additional assumption). 
    \end{proof}

 \begin{remark} In positive characteristic,  point $(1)$ is also a straightforward consequence of Theorem \ref{T6}: since $Y$ is regular, $Y\stackrel{F_Y}\to Y \stackrel{f}\to X$ is a composition of coverings for the fpqc topology, which factors through $F_X$, hence $F_X$ is a covering for the fpqc topology and by \ref{T6}, $X$ is regular.  
  \end{remark} 
  
   \begin{example}\label{exF}  In \cite[Cor 2.2]{BIM}, $U$ is an $S$-module, not necessarily an $S$-algebra. This allows to settle in the negative Ferrand's question (Remark~\ref{qF}), with the example of the double covering of the cone (cf. \ref{ex 4.7}): the local ring $k[[x,y]]$ is a pure extension of $k[[x^2, xy, y^2]] = k[[x,y]]^{{\mathbb  Z}/2}$, but there is no $k[[x,y]]$-module $U$ which is faithfully flat over $k[[x^2, xy, y^2]]$. 
  \end{example}

\frenchspacing

 \end{sloppypar}
\end{document}